\newtheorem{theorem}{Theorem}[section]
\newtheorem{lemma}[theorem]{Lemma}
\newtheorem{claim}[theorem]{Claim}
\newtheorem{rem}[theorem]{Remark}
\newtheorem{definition}[theorem]{Definition}
\newtheorem{proposition}[theorem]{Proposition}
\newtheorem{obs}[theorem]{Observation}
\newtheorem{conj}[theorem]{Conjecture}
\newtheorem*{notat*}{Notation}
\newcommand{\AT}[2]{\mathcal{AT}_{#1}(#2)}
\newcommand{\Pairs}{{\Delta}}
\begin{document}
\title{The maximal number of $3$-term arithmetic progressions in finite sets in different geometries}

\renewcommand{\shorttitle}{Maximal number of $3$-term arithmetic progressions in different geometries}

\author{Itai Benjamini}
\address{Department of Mathematics, Weizmann Institute of Science, Rehovot 7610001, Israel}

\author{Shoni Gilboa}
\address{Department of Mathematics and Computer Science, The Open University of Israel, Raanana 4353701, Israel}

\begin{abstract}
Green and Sisask showed that the maximal number of $3$-term arithmetic progressions in $n$-element sets of integers is $\lceil n^2/2\rceil$; it is easy to see that the same holds if the set of integers is replaced by the real line or by any Euclidean space.
We study this problem in general metric spaces, where a triple $(a,b,c)$ of points in a metric space is considered a \emph{$3$-term arithmetic progression} if $d(a,b)=d(b,c)=\frac{1}{2}d(a,c)$.
In particular, we show that the result of Green and Sisask extends to any Cartan--Hadamard manifold (in particular, to the hyperbolic spaces), but does not hold in spherical geometry or in the $r$-regular tree, for any $r\geq 3$.
\end{abstract}

\keywords{arithmetic progressions, metric spaces, Cartan--Hadamard manifolds}

\maketitle

\section{Introduction}\label{sec:intro}

It was shown in \cite[Theorem 1.2]{GS} that the maximal number of $3$-term arithmetic progressions in $n$-element sets of integers is $\lceil n^2/2\rceil$ 
(counting increasing, decreasing and constant progressions).
The maximum is attained for $n$-term arithmetic progressions, but also for other sets (completely characterized in \cite[Theorem 1.2]{GS}).

Combined with some tools from additive combinatorics, this result was used in \cite{GS} to obtain their main result that $\lceil n^2/2\rceil$ is also the maximal number of $3$-term arithmetic progressions in $n$-element subsets of the additive group ${\mathbb Z}/ p{\mathbb Z}$ for prime $p$, provided that $n/p$ is smaller than some absolute constant.

Additive structure is probably the most natural context of arithmetic progressions, but it may be viewed also as a metric notion, which is the direction we pursue here.

\begin{definition}Let $M$ be a metric space. 

We say that $(a,b,c)\in M^3$ is a \emph{$3$-term arithmetic progression} in $M$ if 
$$d_M(a,b)=d_M(b,c)=\textstyle\frac{1}{2}d_M(a,c),$$
where $d_M$ is the metric of $M$.
For any set $A\subseteq M$, let 
$$\AT M A:=\left\{(a,b,c)\in A^3\mid d_M(a,b)=d_M(b,c)=\textstyle\frac{1}{2}d_M(a,c)\right\}$$
be the set of $3$-term arithmetic progressions in the set $A$. 
For every positive integer $n$, let 
$$\mu_n(M):=\max\left\{|\AT M A|: A\subseteq M,\, |A|=n\right\}$$
be the maximal number of $3$-term arithmetic progressions in $n$-element subsets of $M$.
\end{definition}

As already mentioned, it was shown in \cite{GS} that $\mu_n({\mathbb Z})=\lceil n^2/2\rceil$ for every $n$, and the same argument shows that for every $n$,
\begin{equation}\label{eq:R}
\mu_n({\mathbb R})=\lceil n^2/2\rceil.
\end{equation} 
This yields, by a simple projection argument, that for any $k$, with respect to the Euclidean metric, $\mu_n({\mathbb R}^k)=\lceil n^2/2\rceil$ for every $n$.

We show that this extends to a rather large class of metric spaces. 
First, let us recall some basic notions.
Let $M$ be a metric space;
a curve $\gamma:I\to M$, where $I$ is a connected subset of the real line, is  a \emph{geodesic} if $d_M(\gamma(y),\gamma(x))=y-x$ for every $x<y$ in $I$;  
a set $\Gamma\subseteq M$ is a \emph{geodesic segment with endpoints $p,q$} if there is a geodesic $\gamma:[a,b]\to M$ such that $\Gamma=\gamma([a,b])$, $p=\gamma(a)$ and $q=\gamma(b)$; 
the metric space $M$ is \emph{uniquely geodesic} if any two distinct points in $M$ are the endpoints of a unique geodesic segment;
finally, a curve $\gamma:I\to M$, where $I$ is a connected subset of the real line, is  a \emph{local geodesic} if around every $a\in I$ there is an interval $I_a$ such that the restriction of $\gamma$ to $I\cap I_a$ is a geodesic.  

\begin{theorem}\label{thm:hyper}
Let $M$ be a uniquely geodesic Riemannian manifold in which every local geodesic is a geodesic.
Then, for every $n$,
$$\mu_n(M)=\lceil n^2/2\rceil.$$
Moreover, any set $A$ of $n$ points in $ M$ for which $|\AT M A|=\lceil n^2/2\rceil$ is contained in the image of a geodesic.
\end{theorem}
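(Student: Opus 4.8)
The plan is to reduce the statement about a general uniquely geodesic Riemannian manifold $M$ to the known one-dimensional bound $\mu_n(\mathbb{R}) = \lceil n^2/2\rceil$ from \eqref{eq:R}. The key structural observation is that in $M$, if $(a,b,c)$ is a $3$-term arithmetic progression, i.e.\ $d_M(a,b) = d_M(b,c) = \frac12 d_M(a,c)$, then $b$ must lie on a geodesic segment from $a$ to $c$: by the triangle inequality $d_M(a,c) \le d_M(a,b) + d_M(b,c)$ with equality, so concatenating a geodesic from $a$ to $b$ with one from $b$ to $c$ gives a curve of length $d_M(a,c)$ joining $a$ to $c$, which is therefore a geodesic (here is where ``every local geodesic is a geodesic'' enters — the concatenation is a priori only a local geodesic away from $b$, and we need to rule out a corner; alternatively, any length-minimizing curve is a geodesic); by unique geodesicity this is \emph{the} geodesic segment $[a,c]$, and $b$ is its midpoint. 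So $\AT M A$ is governed entirely by the ``betweenness'' relation on $A$: $(a,b,c)$ is a $3$-AP iff $b$ is the metric midpoint of $a$ and $c$.

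Next I would exploit uniqueness of midpoints: for fixed $a,c$ there is at most one $b$ with $(a,b,c) \in \AT M A$, and it is forced to be the midpoint. First I would handle the degenerate progressions ($a = b = c$, contributing $n$) and then bound the ``genuine'' ones. The natural move is to show any $n$-point set $A$ maximizing $|\AT M A|$ must lie on a single geodesic. Consider the graph (or rather, the collection of maximal geodesic segments) spanned by pairs of points of $A$ that are ``collinear'' in the sense of lying on a common geodesic. I would argue that a $3$-AP $(a,b,c)$ with all three points distinct lies on a unique geodesic through all of $a,b,c$ (unique geodesicity applied to the pair $a,c$, with $b$ on it); so each nondegenerate $3$-AP ``belongs to'' one maximal geodesic segment determined by the pair of its endpoints. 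If $A$ is not contained in a single geodesic, partition (or rather cover) $A$ by the intersections $A \cap \gamma$ over the finitely many maximal geodesics $\gamma$ relevant here; every $3$-AP is counted within one such $A \cap \gamma$, and on each $\gamma$ (which is isometric to an interval of $\mathbb{R}$) the count is at most $\lceil |A\cap\gamma|^2/2 \rceil$ by \eqref{eq:R}. A convexity/superadditivity estimate — $\lceil x^2/2\rceil + \lceil y^2/2 \rceil \le \lceil (x+y)^2/2\rceil$ when the pieces overlap in at most a bounded way, with strict inequality unless one piece is trivial — should then show that spreading points over several geodesics is strictly worse than putting them all on one, giving both the bound $\mu_n(M) \le \lceil n^2/2 \rceil$ and the characterization of equality. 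The matching lower bound $\mu_n(M)\ge\lceil n^2/2\rceil$ is immediate: any geodesic in $M$ is isometric to an interval of $\mathbb{R}$, so an $n$-term arithmetic progression along it realizes the extremal configuration from \eqref{eq:R}.

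The main obstacle I anticipate is the bookkeeping in the covering/overlap argument: two distinct maximal geodesics can share points of $A$ (indeed, in a tree-like or high-dimensional manifold many geodesics through a common point exist, though here unique geodesicity limits this), so the sets $A\cap\gamma$ are not a partition, and I must be careful that each nondegenerate $3$-AP is assigned to exactly one $\gamma$ while each degenerate one (the $n$ constant triples) is not overcounted. The cleanest route is probably: fix $a_0 \in A$; every nondegenerate $3$-AP has two of its three points distinct from some reference and lies on the geodesic through its endpoints, so reorganize the count by endpoints rather than by geodesics — for each unordered pair $\{a,c\}\subseteq A$ there is at most one $b\in A$ completing a $3$-AP, hence $|\AT M A| \le n + 2\cdot\#\{\text{pairs }\{a,c\}: \text{midpoint of }a,c \text{ lies in }A\}$, and then show this pair-count is maximized (at $\binom n2$-ish order, precisely $\lfloor n^2/2\rfloor / $ something) exactly when $A$ is a collinear arithmetic progression, mirroring the Green--Sisask analysis transported along the geodesic. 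I expect the equality case to require re-running (or black-boxing) the Green--Sisask structural description on each geodesic and then checking that any non-collinear $A$ loses at least one $3$-AP; verifying that last strict-loss claim carefully is the delicate part.
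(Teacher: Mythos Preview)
Your setup is right and matches the paper's: define ``lines'' as images of maximal geodesics, observe that every nonconstant $3$-AP $(a,b,c)$ lies on a unique such line (via the midpoint lemma), reduce to the one-dimensional bound \eqref{eq:R} on each line, and get the lower bound by placing an arithmetic progression along a geodesic. Where you diverge from the paper is in the crucial step of showing that a non-collinear configuration is strictly worse, and here your proposal has a genuine gap.

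Your ``convexity/superadditivity'' sketch does not close. The sets $A\cap L$ over lines $L$ are far from a partition: every pair of points of $A$ lies on exactly one line, so $\sum_L \binom{r_L}{2}=\binom{n}{2}$ with $r_L:=|A\cap L|$, but $\sum_L r_L$ can be much larger than $n$. Superadditivity of $x\mapsto x^2$ applied to a genuine partition would give what you want, but with these overlaps there is no such partition to hand. Your fallback pair-count, ``each unordered pair $\{a,c\}$ has at most one midpoint in $A$,'' yields only $|\AT M A|\le n+2\binom{n}{2}=n^2$, which is the trivial bound of Claim~\ref{obs:unique} and is off by a factor of roughly $2$.

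What the paper supplies, and what you are missing, is the de~Bruijn--Erd\H{o}s theorem: if the $n$ points are not all on one line, they determine at least $n$ lines. The paper writes
\[
|\AT M A|-n=\sum_{L}\bigl(|\AT M {A\cap L}|-r_L\bigr)\le\sum_L\Bigl(\binom{r_L}{2}-1\Bigr)=\binom{n}{2}-|\mathcal L_A|\le\binom{n}{2}-n,
\]
using \eqref{eq:R} on each line for the first inequality and de~Bruijn--Erd\H{o}s for the second; this gives $|\AT M A|\le\binom{n}{2}<\lceil n^2/2\rceil$ outright, with no need to re-run Green--Sisask's structural analysis. The ``$-1$'' per line is exactly what makes the counting work, and bounding the number of lines from below is the missing idea.

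One smaller correction: the hypothesis ``every local geodesic is a geodesic'' is \emph{not} used to show the concatenation through $b$ is a geodesic --- that follows from the metric equalities alone (Lemma~\ref{lem:geodesic} only assumes uniquely geodesic). It is used instead, together with the Riemannian structure, to guarantee that every geodesic segment extends to a \emph{unique} maximal geodesic, so that two points lie on exactly one line in $\mathcal L$. Without this, geodesics could branch past a point and the line system would not satisfy the incidence axiom needed for de~Bruijn--Erd\H{o}s.
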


In particular, Theorem \ref{thm:hyper} applies to the hyperbolic spaces, and more generally, to any \emph{Cartan--Hadamard manifold}, i.e., complete simply connected Riemannian manifold that has everywhere nonpositive sectional curvature (see, e.g., \cite{BGS, CE}).
However, the result does not extend to the wider class of metric spaces of global nonpositive curvature, in the sense of A. D. Alexandrov
(also coined CAT($0$) spaces by Gromov, in honor of Cartan, Alexandrov and Toponogov; note that each such metric space is uniquely geodesic, and every local geodesic in it is a geodesic; see, e.g., \cite{BH, BBI}).
For instance,  let $\mathbb T_r$ be the (discrete) $r$-regular tree, $r\geq 2$, equipped with the graph metric, and let $\hat{\mathbb T}_r$ be the corresponding metric graph, where all the edges have unit length. 
The metric tree $\hat{\mathbb T}_r$ is a \emph{Hadamard space}, i.e., a complete globally nonpositively curved metric space.
We show (see Proposition \ref{prop:trees}) that
\begin{equation}\label{eq:trees}
\limsup_{n\to\infty}\frac{\mu_n({\mathbb T}_r)}{n^2}\geq\frac{1}{2}+\frac{(r-2)^2}{2r^2}.
\end{equation}
Hence, for every $r\geq 3$, since obviously $\mu_n({\hat{\mathbb T}_r})\geq\mu_n({\mathbb T_r})$ for every $n$, it holds that
$$\limsup_{n\to\infty}\frac{\mu_n(\hat{\mathbb T}_r)}{n^2}\geq\limsup_{n\to\infty}\frac{\mu_n({\mathbb T}_r)}{n^2}>\frac{1}{2}=\limsup_{n\to\infty}\frac{\lceil n^2/2\rceil}{n^2}.$$
Note that $\mu_n(\hat{\mathbb T}_r)\leq n^2-2n+2$ for every $n$, by the following simple claim.
\begin{claim}\label{obs:unigeodesic}
Let $M$ be a uniquely geodesic metric space (with more than one point). Then, for every $n$,
$$\lceil n^2/2\rceil\leq \mu_n(M)\leq n^2-2n+2.$$
Moreover, there is a uniquely geodesic metric space $M_0$ such that $\mu_n(M_0)=n^2-2n+2$ for every $n$.
\end{claim}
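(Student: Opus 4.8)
The plan is to reformulate $\bigl|\AT M A\bigr|$ as a graph quantity, derive the lower bound from a one-dimensional example, prove the upper bound by an infinite-descent argument along a geodesic, and finally exhibit an extremal space $M_0$.

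I would first record an elementary fact: in a uniquely geodesic space every pair of distinct points $x,y$ has a unique \emph{midpoint} $m(x,y)$, namely the point at parameter $\tfrac12 d_M(x,y)$ on the unique geodesic segment from $x$ to $y$; moreover $z$ is a midpoint of $x,y$ precisely when $d_M(x,z)=d_M(z,y)=\tfrac12 d_M(x,y)$ (the ``only if'' uses that $d_M(x,z)+d_M(z,y)=d_M(x,y)$ forces $z$ onto the geodesic segment, by concatenating geodesics and invoking uniqueness). Hence for $(a,b,c)\in\AT M A$: if $a=c$ then $a=b=c$, and if $a\ne c$ then $b=m(a,c)$; conversely $(a,m(a,c),c)\in\AT M A$ for distinct $a,c\in A$ exactly when $m(a,c)\in A$. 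Calling an unordered pair $\{a,c\}$ of distinct points of $A$ \emph{good} when $m(a,c)\in A$, and letting $G$ be the graph on $A$ with these edges (well-defined since $m$ is symmetric), and noting $m(a,c)\notin\{a,c\}$, one gets
\[
\bigl|\AT M A\bigr| \;=\; n + 2\,|E(G)|,
\]
where $n$ counts the constant progressions. For the lower bound (assume $n\ge2$; $n=1$ is trivial) I would take $n$ equally spaced points on a geodesic segment of $M$; since a geodesic is an isometric embedding of an interval, this set is isometric to an $n$-term arithmetic progression in ${\mathbb R}$, which by \eqref{eq:R} contains $\lceil n^2/2\rceil$ three-term progressions (concretely, the triples $(i,j,k)$ in $\{0,\dots,n-1\}$ with $i+k=2j$, numbering $\lceil n/2\rceil^2+\lfloor n/2\rfloor^2=\lceil n^2/2\rceil$). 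So $\mu_n(M)\ge\lceil n^2/2\rceil$.

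By the displayed identity, $\mu_n(M)\le n^2-2n+2$ is equivalent to $|E(G)|\le\binom{n-1}{2}$, i.e.\ to $|E(\overline{G})|\ge n-1$; since a connected graph on $n$ vertices has at least $n-1$ edges, it suffices to prove that $\overline{G}$ is connected, equivalently that there is \emph{no} partition $A=A_1\sqcup A_2$ into nonempty parts with every ``cross'' pair $\{a_1,a_2\}$ ($a_1\in A_1$, $a_2\in A_2$) good. This is the heart of the argument and I expect it to be the main obstacle. I would argue by contradiction via infinite descent: pick a cross pair $a\in A_1$, $c\in A_2$ maximizing $D:=d_M(a,c)$, let $\gamma\colon[0,D]\to M$ be the geodesic from $a$ to $c$, set $P=A\cap\gamma([0,D])$, and parametrize $P$ injectively by $\tau\colon P\to[0,D]$ via $x=\gamma(\tau(x))$. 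For a cross pair inside $P$, unique geodesicity forces the geodesic segment between its two points to be a sub-arc of $\gamma$, so its midpoint again lies on $\gamma$; hence $\tfrac12(\tau(x)+\tau(x'))\in\tau(P)$ whenever $x,x'\in P$ form a cross pair. Let $\sigma=\max\{\tau(x):x\in P\cap A_1\}$ (attained, since $a\in P\cap A_1$); then $\sigma<D$, as the only point of $P$ at parameter $D$ is $c\in A_2$. Now put $t_0=D$ and $t_{k+1}=\tfrac12(\sigma+t_k)$; using $\gamma(\sigma)\in A_1$ one checks inductively that $\gamma(t_k)\in P\cap A_2$ and $\sigma<t_{k+1}<t_k$ for all $k$ (indeed $\gamma(t_{k+1})\in P$ by the midpoint remark, and its parameter exceeds $\sigma$, so it cannot lie in $A_1$). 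Then $t_0>t_1>t_2>\cdots$ are distinct parameters, so $P\subseteq A$ is infinite — a contradiction.

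Finally, for sharpness I would let $M_0$ be the metric star with countably many rays: copies $R_1,R_2,\dots$ of $[0,\infty)$ glued at their origins to a single point $o$, with the induced path metric ($d(x,y)=|x-y|$ on a common ray, $d(x,y)=|x|+|y|$ otherwise). This is an ${\mathbb R}$-tree, hence uniquely geodesic, so $\mu_n(M_0)\le n^2-2n+2$ by the above. For the matching construction, fix $n\ge2$, let $p_i\in R_i$ be the point at distance $1$ from $o$, and take $A=\{o,p_1,\dots,p_{n-1}\}$: for $i\ne j$ we have $d(p_i,p_j)=2$ with $m(p_i,p_j)=o\in A$, so $\{p_i,p_j\}$ is good, while $m(o,p_i)$ is the point at distance $\tfrac12$ on $R_i$, which is not in $A$. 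Hence $|E(G)|=\binom{n-1}{2}$ and $\bigl|\AT {M_0}A\bigr|=n+2\binom{n-1}{2}=n^2-2n+2$; combined with the upper bound this yields $\mu_n(M_0)=n^2-2n+2$ for every $n$ (the case $n=1$ being trivial).
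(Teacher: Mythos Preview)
Your proof is correct and follows the same architecture as the paper's: lower bound via points on a geodesic, upper bound by showing the ``no-midpoint-in-$A$'' graph on $A$ is connected (hence has at least $n-1$ edges, giving $|\AT M A|=n+2|E(G)|\le n^2-2n+2$), and the extremal example via a star of rays. Two differences are worth noting. First, the paper's connectivity argument is a one-step descent rather than your infinite one: if the complement graph is disconnected, take a cross pair $(a,c)$ of \emph{minimum} distance; since $\{a,c\}$ is good, its midpoint $b\in A$ has $d_M(a,b)=d_M(b,c)<d_M(a,c)$, and $b$ lies in a different component from at least one of $a,c$, contradicting minimality. Your iteration on the geodesic segment works, but the maximality of $D$ is never used, and restricting to $P$ is unnecessary once one has the minimum-distance trick. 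Second, the paper's extremal space $M_0$ (the complex plane with the radial path metric $d(z,w)=|z|+|w|$ unless $z,w$ are real multiples of one another) is just the uncountable-ray version of your $\mathbb{R}$-tree star, with the same extremal set $\{0\}\cup C_{n-1}$; the paper also remarks that the infinite star graph with the graph metric already suffices.
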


Next, we consider some positively curved metric spaces.
\begin{theorem}\label{thm:S1}
For every $n\neq 2$,
$$\mu_n(S^1)= \frac{1}{2}n^2+\begin{cases}
n & n\bmod 4=0,\\
\frac{1}{2}n& n\bmod 4=1,\\
2 & n\bmod 4=2,\\
\frac{1}{2}n-1 & n\bmod 4=3.
\end{cases}$$
\end{theorem}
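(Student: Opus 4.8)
Throughout I identify $S^1$ with $\mathbb R/\mathbb Z$, scaled to circumference $1$, so that $d(x,y)=\|x-y\|$ is the distance to the nearest integer, taking values in $[0,\tfrac12]$. The starting point is a combinatorial description of the progressions: for $a\neq c$, the triple $(a,b,c)$ is a $3$-term arithmetic progression in $S^1$ if and only if $a+c\equiv 2b\pmod 1$ and $d(a,b)\le\tfrac14$ — the inequality saying exactly that $b$ is the midpoint of the shorter of the two arcs with endpoints $a,c$ (when $a$ and $c$ are antipodal, both quarter-turn points qualify; this is the seed of the later parity behaviour). Together with the $n$ constant progressions this yields
$$\bigl|\AT{S^1}{A}\bigr|\;=\;n+\sum_{b\in A}N(b),\qquad N(b):=\bigl|\{\,x\in A\setminus\{b\}\;:\;d(x,b)\le\tfrac14,\ 2b-x\in A\,\}\bigr|,$$
and each $N(b)$ is even, since the reflection $x\mapsto 2b-x$ fixes no point at distance $\le\tfrac14$ of $b$; writing $J_b$ for the arc of length $\tfrac12$ centred at $b$, one has the trivial estimate $N(b)\le|A\cap J_b|-1$.

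For the lower bound I would exhibit, in each residue class of $n$ modulo $4$, an explicit $n$-point set attaining the claimed value, each of them a subset of a finite subgroup $\tfrac1N\mathbb Z/\mathbb Z$: the whole group $\tfrac1n\mathbb Z/\mathbb Z$ when $n\equiv 0,1\pmod 4$; the group $\tfrac1{n+1}\mathbb Z/\mathbb Z$ with one point deleted when $n\equiv 3\pmod 4$; and $\tfrac1{n+2}\mathbb Z/\mathbb Z$ with two points of the same parity at distance $\le\tfrac14$ deleted when $n\equiv 2\pmod 4$. On a subset of $\tfrac1N\mathbb Z/\mathbb Z$ the identity above is evaluated by counting residues modulo $N$ (using that such a set is symmetric about each of its points, so $N(b)=|A\cap J_b|-1$ for the full group); the effect of the deletions is computed by inclusion–exclusion, conveniently organised through the identity $\sum_{p\in A}\#\{\text{$3$-APs through }p\}=3\bigl|\AT{S^1}{A}\bigr|-2n$ (valid because non-constant progressions have three distinct terms) together with a direct count of $3$-APs meeting two prescribed points. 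Carrying this out in the four cases produces the four values in the statement; for $n\equiv 2$ the extra summand $2$ comes precisely from a single pair of antipodal points with a quarter-turn point between them, which is why $n=2$ is exceptional.

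For the upper bound, one first disposes of the case in which $A$ lies in a closed arc of length $\le\tfrac12$ (equivalently, the largest gap between cyclically consecutive points of $A$ is $\ge\tfrac12$): there every $3$-AP of $A$ is a genuine arithmetic progression on that arc, so $\bigl|\AT{S^1}{A}\bigr|\le\mu_n(\mathbb R)=\lceil n^2/2\rceil$, which is below the claimed maximum in each residue class. Hence one may assume every gap is $<\tfrac12$, i.e.\ $A$ surrounds the circle. Here the same identity $\sum_{p\in A}\#\{\text{$3$-APs through }p\}=3\bigl|\AT{S^1}{A}\bigr|-2n$ already gives, by deleting a point through which few progressions pass, the recursion $\mu_n\le\frac{n}{n-3}\bigl(\mu_{n-1}-2\bigr)$; fed the value of $\mu_{n-1}$ this is exactly sharp when $n\equiv 0\pmod 4$ but lossy in the other residues, so a finer argument is needed for $n\equiv 1,2,3\pmod 4$. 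I would look for it in the form of a structural statement — that a maximiser surrounding the circle may be taken, up to rotation, inside a finite subgroup $\tfrac1N\mathbb Z/\mathbb Z$ with $N$ only slightly larger than $n$ — established by a smoothing/exchange argument on the cyclic sequence of gaps, after which the problem collapses to the finite optimisation already carried out for the lower bound; the antipodal $3$-APs (those in which the distance inequality is an equality) must be carried along separately, as they account for the dependence on $n\bmod 4$.

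The main obstacle is exactly this refined upper bound for $n\not\equiv 0\pmod 4$. Unlike the real line, where every midpoint of a pair of $A$ is admissible, the constraint $d(a,b)\le\tfrac14$ couples the count to the metric positions of the points: a configuration that is globally spread but locally clustered — for instance a tiny cluster of points together with two well-placed far-away points — can have $\sum_b|A\cap J_b|$ as large as $\approx n^2$ while its true $3$-AP count is only $\approx\tfrac12 n^2$, so such sets must be excluded through their large reflection defect $\sum_b\bigl(|A\cap J_b|-1-N(b)\bigr)$ by a genuinely geometric argument rather than a single counting inequality. Pinning down the exact additive constants $n,\ \tfrac12 n,\ 2,\ \tfrac12 n-1$ (rather than merely $\tfrac12 n^2+O(n)$), together with the book-keeping of antipodal pairs and of gap vectors that are nearly but not exactly uniform, is where I expect the real work to lie.
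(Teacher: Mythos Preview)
Your lower-bound constructions are essentially those of the paper (evenly spread sets, possibly with one or two points added or removed), and your inclusion--exclusion via $\sum_p\#\{\text{3-APs through }p\}=3|\AT{S^1}{A}|-2n$ is an equivalent way to carry out the computations in Claim~\ref{claim:ATMn}. The genuine gap is the upper bound. Your deletion recursion $\mu_n\le\frac{n}{n-3}(\mu_{n-1}-2)$ is correct, but it cannot close the induction: to get the sharp value at $n\equiv 0\pmod 4$ you already need the sharp value at $n-1\equiv 3\pmod 4$, and at $n\equiv 1,2,3\pmod 4$ the recursion is genuinely lossy (for instance at $n=5$ it yields $25$ against the true $15$). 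The ``smoothing/exchange to a finite subgroup'' step you propose to fill the gap is not an argument yet, and I do not see how to make it one: perturbing a single gap in the cyclic gap-sequence moves several of the reflection constraints $2b-x\in A$ simultaneously and in opposite directions, so there is no obvious monotone potential to push along.

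The paper avoids all of this by a different mechanism. For each $a\in A$ there are one or two ``near-antipodal'' partners $b\in A$ for which the two open arcs $C_{a,b},C_{b,a}$ contain almost the same number of points of $A$; call the set of such pairs $\Delta(A)$. The key lemma is the pointwise bound $\tfrac12 w_A(a)+\tfrac12 w_A(b)\le\lfloor n/2\rfloor+1$ for every $\{a,b\}\in\Delta(A)$, proved by a short reflection argument on the longer arc. Since every $a\in A$ occurs in exactly $\nu\in\{1,2\}$ pairs of $\Delta(A)$, summing gives $|\AT{S^1}{A}|\le n\lfloor n/2\rfloor+n$ directly, which is already sharp for $n\equiv 0,1\pmod 4$; for $n\equiv 3\pmod 4$ a parity observation knocks off the last unit. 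The only hard case is $n\equiv 2\pmod 4$: there one analyses the equality case of the key lemma (it forces $a,b$ antipodal, $A$ symmetric under $R_a$, and the two quarter-points in $A$), shows such pairs form an evenly spread set, and then produces, for each such ``good'' pair, a compensating pair on which $\tfrac12 w_A(a)+\tfrac12 w_A(b)\le\tfrac n2-1$. This compensation lemma is the actual content behind the constant $+2$, and it is what your sketch is missing.
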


In particular, \eqref{eq:R} and Theorem \ref{thm:S1} imply that $\mu_n(S^1)>\lceil n^2/2\rceil$ for every $n\geq 4$.

We do not know, for general $n$, what is the maximal number of $3$-term arithmetic progressions in $n$-element subsets of the $2$-dimensional sphere $S^2$. However, we were able to show (see Proposition \ref{prop:equator} and the discussion thereafter) that
\begin{equation}\label{eq:S2S1}
\mu_n(S^2)>\mu_n(S^1) \text{ for every } n\geq 5.
\end{equation}

\smallskip
For the $\ell$-dimensional lattice graph ${\mathbb Z}^{\ell}$ (where two vertices are adjacent if the Euclidean distance between them is $1$), with respect to the graph metric, we show (see Proposition \ref{prop:lattices}) that 
\begin{equation}\label{eq:lattices}
\mu_n\left({\mathbb Z}^{\ell}\right)=\Omega\left(n^{3-\frac{1}{\ell}}\right).
\end{equation}

\medskip
Finally, we show (see Proposition \ref{prop:goodman}) that
\begin{equation}\label{eq:goodman}
\max_{M \text{ is a metric space}}\mu_n(M)=\frac{1}{4}n^3-\frac{1}{2}n^2+\Theta(n).
\end{equation} 

\smallskip
This work was inspired by the analogous route taken by the study of the isoperimetric problem, from the classical setting in the Euclidean plane to diverse geometric context (see, e.g., \cite{Oss, BZ, Cha} and the references within), including spherical geometry (see, e.g., \cite{Gro}), Cartan--Hadamard manifolds (see, e.g., \cite{KK} and the references within), and graphs (see, e.g., \cite{Lea} and the references within).

\medskip
The rest of the paper is organized as follows. In Section \ref{sec:prelim} we introduce some notation and make some preliminary observations.
In Section \ref{sec:hyper} we prove Theorem \ref{thm:hyper} and Claim \ref{obs:unigeodesic}.
In Section \ref{sec:spheric} we prove Theorem \ref{thm:S1}, confirm \eqref{eq:S2S1} and make some additional observations regarding 
the problem in the $2$-dimensional sphere $S^2$.
In Section \ref{sec:misc} we prove \eqref{eq:trees},\eqref{eq:lattices} and \eqref{eq:goodman}.
Finally, in Section \ref{sec:open}, we suggest several directions for future study.

\subsection*{Acknowledgements} 
We thank Lev Buhovski, Dan Hefetz, Bo'az Klartag and Pierre Pansu for fruitful discussions, and the anonymous referees for their helpful comments. 

\section{Notation and Preliminaries}\label{sec:prelim}

For a point $b$ in a metric space $M$ and a nonnegative real number $d$, let
$$S_M(b; d):=\{x\in M\mid d_M(b,x)=d\}, \quad B_M(b; d):=\{x\in M\mid d_M(b,x)\leq d\}.$$

For a finite set $A$ of points in a metric space $ M$ and $b\in A$, denote
$$w_A(b):=|\{(x,y,z)\in\AT M A: y=b\}|.$$
Clearly,
\begin{equation}\label{eq:sum_w}
|\AT M A|=\sum_{b\in A}w_A(b).
\end{equation}

\begin{obs}\label{obs:parity}
Note that $(b,b,b)\in\AT M A$ for every $b\in A$ and that if $(a,b,c)\in\AT M A$, then $(c,b,a)\in\AT M A$ as well. 
Therefore, $|\AT M A|-|A|$ is always even. Moreover, $w_A(b)$ is odd for every $b\in A$.
\end{obs}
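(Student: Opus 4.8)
The plan is to first confirm the two elementary facts asserted at the start of the observation, and then to exploit the second one as an involution in order to deduce both parity statements at once.

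First I would verify that $(b,b,b)\in\AT M A$: since $d_M(b,b)=0$, the three quantities $d_M(b,b)$, $d_M(b,b)$ and $\frac12 d_M(b,b)$ all equal $0$, so the defining equalities hold trivially. Next, the defining condition $d_M(a,b)=d_M(b,c)=\frac12 d_M(a,c)$ is manifestly symmetric under interchanging $a$ and $c$, because the metric is symmetric: $d_M(c,b)=d_M(b,c)$, $d_M(b,a)=d_M(a,b)$ and $d_M(c,a)=d_M(a,c)$. Hence $(a,b,c)\in\AT M A$ implies $(c,b,a)\in\AT M A$, so the map $\sigma(x,y,z):=(z,y,x)$ is a well-defined involution of $\AT M A$ that fixes the middle coordinate.

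For the claim that $w_A(b)$ is odd, I would restrict $\sigma$ to the fibre $F_b:=\{(x,y,z)\in\AT M A: y=b\}$, which has exactly $w_A(b)$ elements; since $\sigma$ preserves the middle coordinate, it maps $F_b$ into itself. An involution partitions a finite set into fixed points and $2$-element orbits, so $|F_b|$ has the same parity as the number of fixed points of $\sigma|_{F_b}$. Now a triple $(x,b,z)$ is fixed precisely when $x=z$, and in that case membership in $\AT M A$ forces $d_M(x,b)=\frac12 d_M(x,x)=0$, whence $x=b$ by positive-definiteness of the metric. Thus the unique fixed point in $F_b$ is $(b,b,b)$, giving $w_A(b)\equiv 1\pmod 2$.

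Finally, summing over $b\in A$ and invoking \eqref{eq:sum_w}, the quantity $|\AT M A|=\sum_{b\in A}w_A(b)$ is a sum of $|A|$ odd numbers, hence congruent to $|A|$ modulo $2$; equivalently, $|\AT M A|-|A|$ is even. (Alternatively, one may apply $\sigma$ directly to all of $\AT M A$, whose fixed points are exactly the diagonal triples $(b,b,b)$, of which there are $|A|$.) The argument is routine; the only point requiring genuine care is the fixed-point analysis, where one must use that $d_M$ vanishes only on the diagonal in order to rule out degenerate fixed triples $(x,b,x)$ with $x\neq b$.
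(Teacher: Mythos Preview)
Your argument is correct and is exactly the reasoning the paper intends: the observation itself is stated without a separate proof, with the word ``Therefore'' signalling that the two parity claims follow from pairing each nonconstant triple $(a,b,c)$ with its reverse $(c,b,a)$, which is precisely the involution you spell out. You have simply made explicit the fixed-point count (only the diagonal triples $(b,b,b)$) that the paper leaves to the reader.
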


\begin{claim}\label{obs:unique}
Let $M$ be a metric space such that for any two points $a,c\in M$ there is at most one point $b\in M$ such that $(a,b,c)$ is a $3$-term arithmetic progression. 
Then, for every $n$,
$$\mu_n(M)\leq n^2-2n+2.$$
\end{claim}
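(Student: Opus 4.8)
The plan is to bound the number of ``good'' endpoint-pairs and then count. Call a pair $\{a,c\}$ of distinct points of $A$ \emph{good} if there is a point $b\in A$ (necessarily unique, by the hypothesis on $M$) with $(a,b,c)$ a $3$-term arithmetic progression, and \emph{bad} otherwise. Every element of $\AT M A$ is either one of the $n$ ``trivial'' triples $(b,b,b)$, $b\in A$ — note that $a=c$ forces $b=a=c$, since then $d_M(a,c)=0$ — or else has the form $(a,b,c)$ or $(c,b,a)$ with $\{a,c\}$ a good pair and $b$ its midpoint; moreover $b\notin\{a,c\}$ (again because $a\neq c$ gives $d_M(a,c)>0$) and $(a,b,c)\neq(c,b,a)$ because $a\neq c$. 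Hence $|\AT M A|=n+2g$, where $g$ denotes the number of good pairs, and it suffices to prove $g\le\binom{n-1}{2}$, i.e.\ that $A$ contains at least $n-1$ bad pairs.

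To produce $n-1$ bad pairs I would invoke a minimum spanning tree. Let $T$ be a minimum spanning tree of the complete graph on vertex set $A$ with edge $\{x,y\}$ weighted by $d_M(x,y)$, and I claim that every one of its $n-1$ edges is a bad pair. Suppose instead that some edge $\{a,c\}\in T$ is good, with midpoint $b\in A$; then $b\notin\{a,c\}$ and $d_M(b,c)=\tfrac12 d_M(a,c)<d_M(a,c)$. Deleting $\{a,c\}$ from $T$ splits it into two subtrees, with vertex sets $A_a\ni a$ and $A_c\ni c$, and $b$ lies in exactly one of them. If $b\in A_a$, then $\{b,c\}$ joins the two sides of the cut $(A_a,A_c)$, so $T-\{a,c\}+\{b,c\}$ is again a spanning tree; minimality of $T$ forces $d_M(b,c)\ge d_M(a,c)$, contradicting the strict inequality above. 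If instead $b\in A_c$, the same exchange with the edge $\{a,b\}$ in place of $\{b,c\}$ gives $d_M(a,b)\ge d_M(a,c)$, again a contradiction. Thus $T$ supplies $n-1$ bad pairs, so $g\le\binom{n}{2}-(n-1)=\binom{n-1}{2}$ and
$$|\AT M A|=n+2g\le n+(n-1)(n-2)=n^2-2n+2.$$

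I do not expect a genuine obstacle here: the argument reduces to the standard minimum-spanning-tree exchange property combined with the two-to-one correspondence between non-trivial progressions and good pairs, and the small cases $n\le 2$ require no separate treatment. The only point needing care is that the inequality $d_M(b,c)<d_M(a,c)$ is \emph{strict}, which is what makes the tree exchange an actual contradiction even when the distances among points of $A$ are not all distinct; this is automatic since $a\neq c$ for a non-trivial progression. (One could instead run a Kruskal-style sweep over the pairs in order of increasing distance and check directly that each accepted pair is bad, but the exchange argument is the most economical route.)
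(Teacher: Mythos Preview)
Your proof is correct and follows essentially the same strategy as the paper: both arguments show that there are at least $n-1$ bad pairs, then count $|\AT M A|=n+2g\le n+2\bigl(\binom{n}{2}-(n-1)\bigr)=n^2-2n+2$. The only difference is in how the $n-1$ bad pairs are produced. The paper defines the graph $G$ on $A$ whose edges are the bad pairs and shows directly that $G$ is connected: if not, the pair $\{a,c\}$ of minimal distance between two components would have a midpoint $b\in A$ in one of the components, contradicting minimality. Your minimum-spanning-tree exchange is the same minimality argument viewed through the cut property of MSTs; it is equally short and perhaps a touch more concrete, while the paper's phrasing avoids invoking the MST machinery at all. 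Neither approach gains anything substantive over the other.
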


\begin{proof}
Let $A$ be a set of $n$ points in $M$, and consider the graph $G=(A,E)$, where 
$$E:=\left\{\{a,c\}\subseteq A\mid \text{there is no }b\in A \text{ such that } (a,b,c)\in\AT M A\right\}.$$
If the graph $G$ is not connected, then let $a,c$ be vertices in different connected components of $G$ for which $d_M(a,c)$ is minimal. Obviously, the vertices $a,c$ are not adjacent, i.e., there is $b\in A$ such that $(a,b,c)$ is a $3$-term arithmetic progression in $M$. At least one of the vertices $a,c$ is not in the same connected component as $b$, contradicting the minimality of $d_M(a,c)$, since $d_M(a,b)=d_M(b,c)=\textstyle\frac{1}{2}d_M(a,c)<d_M(a,c)$.
Thus, the graph $G$ is necessarily connected. In particular, $|E|\geq n-1$ and hence, 
\begin{equation*}
|\AT M A|\leq n^2-2|E|\leq n^2-2(n-1)=n^2-2n+2.
\qedhere\end{equation*}
\end{proof}
We remark that there are metric spaces satisfying the assumption of Claim \ref{obs:unique} for which the bound given in the claim is tight for every $n$. One such example is the metric space $M_0$ presented in the proof of Claim \ref{obs:unigeodesic}. A similar, simpler, example is a bipartite graph with only one vertex in one part and infinitely many vertices in the other part, equipped with the graph metric.

\section{The problem in Cartan--Hadamard manifolds (and beyond)}\label{sec:hyper}

\begin{lemma}\label{lem:geodesic}
Let $M$ be a uniquely geodesic metric space, and let $(a,b,c)$ be a nonconstant $3$-term arithmetic progression in $M$. 
Then, $b$ lies on the geodesic segment with endpoints $a,c$.
\end{lemma}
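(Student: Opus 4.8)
The plan is to exhibit \emph{a} geodesic segment with endpoints $a$ and $c$ that contains $b$; since the progression is nonconstant we have $d_M(a,c)>0$, so $a\neq c$, and unique geodesicity then forces this segment to be \emph{the} geodesic segment with endpoints $a,c$.

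The only feature of the arithmetic-progression hypothesis I would use is the additivity $d_M(a,b)+d_M(b,c)=d_M(a,c)$ (both sides equal $d_M(a,c)$). Pick a geodesic $\gamma_1\colon[0,r]\to M$ from $a$ to $b$ and a geodesic $\gamma_2\colon[0,s]\to M$ from $b$ to $c$, where $r=d_M(a,b)$ and $s=d_M(b,c)$, and let $\gamma\colon[0,r+s]\to M$ be their concatenation: $\gamma(t)=\gamma_1(t)$ for $t\le r$ and $\gamma(t)=\gamma_2(t-r)$ for $t\ge r$ (consistent at $t=r$, where both equal $b$). Note that $r+s=d_M(a,c)$.

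The main step is to verify that $\gamma$ is a geodesic, i.e. that $d_M(\gamma(x),\gamma(y))=y-x$ whenever $0\le x\le y\le r+s$. If $x,y$ both lie in $[0,r]$ or both in $[r,r+s]$, this is immediate since $\gamma_1$ and $\gamma_2$ are geodesics. In the remaining case $x\le r\le y$, the triangle inequality gives $d_M(\gamma(x),\gamma(y))\le d_M(\gamma(x),b)+d_M(b,\gamma(y))=(r-x)+(y-r)=y-x$; conversely, since $d_M(a,\gamma(x))=x$ and $d_M(\gamma(y),c)=(r+s)-y$, the triangle inequality along $a,\gamma(x),\gamma(y),c$ yields $y-x=d_M(a,c)-d_M(a,\gamma(x))-d_M(\gamma(y),c)\le d_M(\gamma(x),\gamma(y))$. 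Hence $\gamma$ is a geodesic, so $\Gamma:=\gamma([0,r+s])$ is a geodesic segment with endpoints $a$ and $c$; by unique geodesicity it is the geodesic segment with endpoints $a,c$, and it contains $b=\gamma(r)$.

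I do not anticipate a genuine obstacle here: the argument is essentially the standard fact that metric additivity along a triple places the middle point on a geodesic. The only points requiring a little care are the bookkeeping in the mixed case above and the implicit convention that a uniquely geodesic space has (exactly) one geodesic segment between any two distinct points, so that the existence of $\gamma_1$ and $\gamma_2$ is guaranteed.
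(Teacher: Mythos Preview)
Your proof is correct and is essentially the same as the paper's: both concatenate a geodesic from $a$ to $b$ with a geodesic from $b$ to $c$, verify via the two-sided triangle inequality that the concatenation is a geodesic from $a$ to $c$, and then invoke unique geodesicity. The only cosmetic differences are the parametrization (you use $[0,r+s]$ rather than $[-\delta,\delta]$) and your observation that only the additivity $d_M(a,b)+d_M(b,c)=d_M(a,c)$ is needed, not the full equality $d_M(a,b)=d_M(b,c)$.
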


\begin{proof}
For simplicity, denote $\delta:=d_M(a,b)=d_M(b,c)=\textstyle\frac{1}{2}d_M(a,c)>0$.
There are geodesics $\gamma_a:[-\delta,0]\to M, \,\,\, \gamma_c:[0,\delta]\to M$ such that 
\begin{equation*}
\gamma_a(-\delta)=a,\quad \gamma_a(0)=\gamma_c(0)=b,\quad \gamma_c(\delta)=c.
\end{equation*}
Define $\gamma:[-\delta,\delta]\to M$ by 
\begin{equation*}
\gamma(t):=\begin{cases}
\gamma_a(t) & -\delta\leq t\leq 0,\\
\gamma_c(t) & 0\leq t\leq\delta. 
\end{cases}
\end{equation*}
Then, for every $-\delta\leq x< 0< y\leq \delta$,
\begin{align*}
d_M(\gamma(y),\gamma(x))&\leq d_M(\gamma(y),\gamma(0))+d_M(\gamma(0),\gamma(x))\\
&=d_M(\gamma_c(y),\gamma_c(0))+d_M(\gamma_a(0),\gamma_a(x))=(y-0)+(0-x)=y-x,\\
d_M(\gamma(y),\gamma(x))&\geq d_M(\gamma(\delta),\gamma(-\delta))-d_M(\gamma(\delta),\gamma(y))-d_M(\gamma(x),\gamma(-\delta))\\
&= d_M(c,a)-d_M(\gamma_c(\delta),\gamma_c(y))-d_M(\gamma_a(x),\gamma_a(-\delta))\\
&=2\delta-(\delta-y)-(x-(-\delta))=y-x
\end{align*}
and hence $d_M(\gamma(y),\gamma(x))=y-x$. Obviously, this is also true if $-\delta\leq x< y\leq 0$ or $0\leq x< y\leq \delta$. Therefore, $\gamma$ is a geodesic, and the result follows.
\end{proof}

We may now prove Claim \ref{obs:unigeodesic}.
\begin{proof}[Proof of Claim \ref{obs:unigeodesic}]
Take an arbitrary nonconstant geodesic $\gamma:I\to M$. For every $n$ we may find an $n$-term arithmetic progression $A_n\subset I$ and then,
\begin{equation*}
\mu_n(M)\geq|\AT M {\gamma(A_n)}|=|\AT {\mathbb R} {A_n}|=\lceil n^2/2\rceil,
\end{equation*}
proving the lower bound.
The upper bound follows immediately from Lemma \ref{lem:geodesic} and Claim \ref{obs:unique}.

Finally, consider the metric space $M_0$ obtained by endowing the complex plane with the metric 
$$d_{M_0}(z,w)=\begin{cases}
|z-w| & \text{either } w\neq 0 \text{ and } z/w\in{\mathbb R} \text{ or } w=0,\\
|z|+|w| & \text{otherwise}.
\end{cases}$$
The metric space $M_0$ is clearly uniquely geodesic. For every $n$, let $C_{n-1}$ be a set of $n-1$ complex points with modulus $1$. 
Then, 
$$
\AT {M_0} {C_{n-1}\cup\{0\}}=\{(a,0,c)\mid a,c\in C_{n-1},\, a\neq c\}\cup\{(b,b,b)\mid b\in C_{n-1}\cup\{0\}\}
$$
and hence
$$|\AT {M_0} {C_{n-1}\cup\{0\}}|=(n-1)(n-2)+n=n^2-2n+2.$$
Therefore, $\mu_n(M_0)=n^2-2n+2$.
\end{proof}

Theorem \ref{thm:hyper} will easily follow, by using Lemma \ref{lem:geodesic}, from the following proposition.

\begin{proposition}\label{prop:GSgen}
Let $M$ be a metric space and let $\mathcal L$ be a family of subsets of $M$, that we will refer to as `lines', such that the following conditions hold:
\begin{itemize}
\item Each line in $\mathcal L$  is isometric to a subset of the real line. 
\item For any two distinct points in $M$ there is a unique line in $\mathcal L$ containing them both.
\item For every nonconstant $3$-term arithmetic progression $(a,b,c)$ in $M$, the points $a,b,c$ lie on a common line in $\mathcal L$ (which is obviously unique, by the previous condition).
\end{itemize}
Then, $\mu_n(M)\leq\lceil n^2/2\rceil$ for every $n$.

Moreover, if $\mu_n(M)=\lceil n^2/2\rceil$, then any set $A$ of $n$ points in $ M$ for which $|\AT M A|=\lceil n^2/2\rceil$ is contained in a line in $\mathcal L$.
\end{proposition}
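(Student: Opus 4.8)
The plan is to reduce the problem to the one-dimensional case, i.e., to the known bound $\mu_n(\mathbb R)=\lceil n^2/2\rceil$ from \eqref{eq:R}, by exploiting the line structure. Fix a set $A\subseteq M$ with $|A|=n$. The first step is to observe that, by the third hypothesis, every nonconstant $3$-term arithmetic progression $(a,b,c)$ counted in $|\AT M A|$ has all three of its points on a single line $\ell\in\mathcal L$. Since each line is isometric to a subset of $\mathbb R$, and the notion of $3$-term arithmetic progression only depends on the metric, such a triple $(a,b,c)$ is a $3$-term arithmetic progression in the metric space $\ell$, hence (transporting via the isometry) a genuine $3$-term arithmetic progression of real numbers. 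Thus $|\AT M A|$ splits as the number of constant triples, which is $n$, plus the contribution of the nonconstant ones, which is a sum over lines.

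The second step is the bookkeeping that makes the sum-over-lines estimate add up correctly. For a line $\ell\in\mathcal L$ write $A_\ell:=A\cap\ell$ and $n_\ell:=|A_\ell|$; by \eqref{eq:R} applied inside $\ell\cong(\text{subset of }\mathbb R)$ we have $|\AT{\ell}{A_\ell}|\le\lceil n_\ell^2/2\rceil$, and by Observation~\ref{obs:parity} this count includes exactly $n_\ell$ constant triples, so the number of \emph{nonconstant} $3$-term arithmetic progressions within $A_\ell$ is at most $\lceil n_\ell^2/2\rceil-n_\ell$. A nonconstant progression lies on a unique line (second hypothesis), so summing over all lines that meet $A$ in at least two points,
\begin{equation*}
|\AT M A|\;\le\; n+\sum_{\ell}\Bigl(\bigl\lceil \tfrac{n_\ell^2}{2}\bigr\rceil-n_\ell\Bigr).
\end{equation*}
Now I would bound $\lceil n_\ell^2/2\rceil-n_\ell\le \tfrac12 n_\ell(n_\ell-1)$ (valid since $\lceil n_\ell^2/2\rceil\le (n_\ell^2+1)/2$ and checking the two parities, using $n_\ell\ge 2$), so the right-hand side is at most $n+\tfrac12\sum_\ell n_\ell(n_\ell-1)$. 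The key combinatorial fact is that $\sum_\ell \binom{n_\ell}{2}\le\binom{n}{2}$: indeed each unordered pair $\{a,c\}\subseteq A$ lies on exactly one line (uniqueness of lines through two points), so the pairs are distributed among the lines without repetition. This gives $|\AT M A|\le n+n(n-1)/2$. A small case check on parity of $n$ shows $n+n(n-1)/2=\lceil n^2/2\rceil$ when $n$ is even or odd? — actually $n+\binom n2=\binom{n+1}{2}$, which equals $\lceil n^2/2\rceil$ only for… so at this stage I expect the crude bound to be off by a linear term, and the real work will be to tighten it.

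The main obstacle, and where the argument must be sharpened, is precisely this: the naive "distribute the pairs among lines" estimate loses too much, and one needs the finer structure of the extremal one-dimensional configurations. The fix is to not pass to $n_\ell(n_\ell-1)/2$ but to keep $\lceil n_\ell^2/2\rceil - n_\ell$ and argue directly that $\sum_\ell\bigl(\lceil n_\ell^2/2\rceil-n_\ell\bigr)\le \lceil n^2/2\rceil - n$, which is a convexity-type inequality for the function $m\mapsto\lceil m^2/2\rceil - m$ under the constraint $\sum_\ell\binom{n_\ell}{2}\le\binom n2$, together with the observation that only one line can have $n_\ell\ge 2$ without forcing $\sum_\ell\binom{n_\ell}2 < \binom n2$ to be strict by a lot — more precisely, if two distinct lines each meet $A$ in $\ge 2$ points then the pair-count strictly drops, and one tracks how much. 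I would therefore organize the endgame as: (i) if a single line contains all of $A$, we are in the $\mathbb R$ case and done with equality possible; (ii) otherwise, split $A$ across $\ge 2$ lines, show the loss in the pair budget translates, via the discrete convexity of $m\mapsto \lceil m^2/2\rceil$, into $\sum_\ell \lceil n_\ell^2/2\rceil \le \lceil n^2/2\rceil - (\text{positive slack})$, absorbing the $+n$ constant term. This same dichotomy yields the "moreover" clause: equality $|\AT M A| = \lceil n^2/2\rceil$ is possible only in case (i), i.e., $A$ lies on a single line of $\mathcal L$. I expect the convexity/slack computation in step (ii) to be the one genuinely delicate point; everything else is the structural reduction, which the three hypotheses hand us almost for free.
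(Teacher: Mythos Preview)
Your initial decomposition is exactly the paper's: split off the $n$ constant triples, assign each nonconstant progression to its unique line, and bound the contribution of a line $\ell$ with $n_\ell:=|A\cap\ell|\ge 2$ by $\lceil n_\ell^2/2\rceil-n_\ell$. You also correctly diagnose that passing to $\binom{n_\ell}{2}$ and using $\sum_\ell\binom{n_\ell}{2}\le\binom n2$ only yields $\binom{n+1}{2}$, which is off by a linear term.

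The gap is in your proposed repair. First, $\sum_\ell\binom{n_\ell}{2}=\binom n2$ is an \emph{equality}, not an inequality: every unordered pair lies on exactly one line, so there is no ``pair budget slack'' to exploit. In particular your claim that ``if two distinct lines each meet $A$ in $\ge 2$ points then the pair-count strictly drops'' is false (three non-collinear points give three lines with $n_\ell=2$ each, and $3\cdot\binom{2}{2}=\binom{3}{2}$). Second, your convexity idea is aimed at the wrong constraint: you are not partitioning $n$ among the lines, only the $\binom n2$ pairs, and the map $\binom{m}{2}\mapsto\lceil m^2/2\rceil-m$ is \emph{concave} in the pair count, so convexity pushes the wrong way.

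The actual fix the paper uses is short and different in kind. Sharpen the per-line estimate by one unit: since $n_\ell\ge 2$, one has $\lceil n_\ell^2/2\rceil-n_\ell=\binom{n_\ell}{2}-\lfloor n_\ell/2\rfloor\le\binom{n_\ell}{2}-1$. Summing,
\[
|\AT M A|-n\le\sum_\ell\Bigl(\tbinom{n_\ell}{2}-1\Bigr)=\tbinom n2-|\mathcal L_A|,
\]
where $\mathcal L_A$ is the set of lines meeting $A$ in at least two points. Now invoke the de~Bruijn--Erd\H{o}s theorem: if the $n$ points of $A$ are not all on one line, then $|\mathcal L_A|\ge n$. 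This gives $|\AT M A|\le\binom n2<\lceil n^2/2\rceil$, which both closes the bound and immediately yields the ``moreover'' clause. The missing ingredient in your sketch is precisely this appeal to de~Bruijn--Erd\H{o}s (or an equivalent lower bound on the number of determined lines); no convexity argument is needed.
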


Note that it immediately follows from Proposition \ref{prop:GSgen} that $\mu_n(M)\leq\lceil n^2/2\rceil$ for every $n$ in case $M$ is an Euclidean or a hyperbolic space of any dimension, by taking $\mathcal L$ to be the family of lines in $M$, in the usual geometric sense.

\begin{proof}[Proof of Proposition \ref{prop:GSgen}]
Let $A$ be a set of $n$ points in $M$. 

If $A$ is contained in a line $L\in\mathcal L$, then since $L$ is isometric to a subset of the real line, it follows from \eqref{eq:R} that $|\AT M A|\leq\left\lceil n^2/2\right\rceil$.

Assume that $A$ is not contained in a line in $\mathcal L$.
For every $L\in{\mathcal L}$, let 
$$r_L:=|A\cap L|.$$
Let
$${\mathcal L}_A:=\{L\in{\mathcal L}\mid r_L\geq 2\}$$ 
be the set of lines in $\mathcal L$ `determined' by the set $A$.
For every $L\in{\mathcal L}_A$, since $L$ is isometric to a subset of the real line, it follows from \eqref{eq:R} that
\begin{equation}\label{eq:GS}
|\AT M {A\cap L}|-r_L\leq\left\lceil\frac{r_L^2}{2}\right\rceil-r_L
=\binom{r_L}{2}-\left\lfloor \frac{r_L}{2}\right\rfloor\leq \binom{r_L}{2}-1.
\end{equation}
For any two distinct points in $A$, there is a unique line in ${\mathcal L}_A$ containing them both. Therefore,
\begin{equation}\label{eq:sum_binom}
\sum_{L\in{\mathcal L}_A}\binom{r_L}{2}=\binom{n}{2},
\end{equation}
and moreover, since the points of $A$ are not all on a single line, 
\begin{equation}\label{eq:Fisher}
|{\mathcal L}_A|\geq n,
\end{equation}
by the de Bruijn--Erd\H{o}s theorem \cite{dBE}.
Finally, for each nonconstant $(a,b,c)\in \AT M A$, there is a unique line in ${\mathcal L}_A$ containing all three points $a,b,c$. Hence,
\begin{equation*}
|\AT M A|-n=\sum_{L\in{\mathcal L}_A}\left(|\AT M {A\cap L}|-r_L\right).
\end{equation*}
Therefore, by \eqref{eq:GS}, \eqref{eq:sum_binom} and \eqref{eq:Fisher},
\begin{equation*}
|\AT M A|-n\leq\sum_{L\in{\mathcal L}_A}\left(\binom{r_L}{2}-1\right)=\binom{n}{2}-|{\mathcal L}_A|\leq \binom{n}{2}-n,
\end{equation*}
and hence, $|\AT M A|\leq\binom{n}{2}<\left\lceil n^2/2\right\rceil$, which concludes the proof.
\end{proof}

We are now ready to prove Theorem \ref{thm:hyper}.

\begin{proof}[Proof of Theorem \ref{thm:hyper}]
Consider the family of `lines'
$${\mathcal L}:=\{\gamma(I)\mid \gamma:I\to M \text{ is a maximal geodesic}\},$$
where a geodesic is maximal if it can not be extended to a geodesic with a larger domain.

Any geodesic segment in $M$ is contained in a unique line in $\mathcal L$. 
Indeed, let $\Gamma$ be a geodesic segment in $M$, let $p,q$ be its endpoints and let $\delta:=d_M(p,q)$. 
There is a unique geodesic $\gamma:[0,\delta]\to M$ such that $\gamma([0,\delta])=\Gamma$, $\gamma(0)=p$ and $\gamma(\delta)=q$. 
Since $M$ is  a Riemannian manifold, $\gamma$ may be uniquely extended to a maximal local geodesic $\hat{\gamma}:I\to M$ (i.e., a local geodesic that can not be extended to a local geodesic with a larger domain). Since each local geodesic in $M$ is a geodesic, it follows that $\hat{\gamma}(I)$ is the unique line in $\mathcal L$ containing the geodesic segment $\Gamma$.

Let us verify that the metric space $M$ and the family $\mathcal L$ of subsets of $M$ satisfy all the conditions of Proposition \ref{prop:GSgen}.
For any geodesic $\gamma:I\to M$, the set $\gamma(I)$ is isometric to the subset $I$ of the real line;
in particular, every line in $\mathcal L$ is isometric to a subset of the real line.
For any two distinct points $p,q$ in $M$, since $M$ is uniquely geodesic, there is a unique geodesic segment $\Gamma$ with endpoints $p,q$; the unique line in $\mathcal L$ containing $\Gamma$ is obviously the unique line in $\mathcal L$ containing both $p$ and $q$.
Finally, if $(a,b,c)$ is a nonconstant $3$-term arithmetic progression in $M$, then $b$ lies on the  geodesic segment $\Gamma$ with endpoints $a,c$, by Lemma \ref{lem:geodesic}; hence, the points $a,b,c$ lie on the line in $\mathcal L$ containing $\Gamma$. 

The result now follows from Proposition \ref{prop:GSgen} and the lower bound in Claim \ref{obs:unigeodesic}.
\end{proof}

\section{The problem in spherical geometry}\label{sec:spheric}

First, we consider the unit circle $S^1=\{u\in{\mathbb R}^2:|u|=1\}$, equipped with the arc length metric.

For every pair of distinct points $a,b$ in $S^1$, let $C_{a,b}$ be the open arc of $S^1$ from $a$ to $b$ counterclockwise, and let $M_{a,b}$ be the midpoint of the arc $C_{a,b}$.

We say that a set $\{p_1,p_2,\ldots,p_n\}$ of $n\geq 2$ points in $S^1$, where the points $p_1,p_2,\ldots,p_n$ are ordered counterclockwise, is \emph{evenly spread around the circle} if 
$$d_{S^1}(p_1,p_2)=\cdots=d_{S^1}(p_{n-1},p_n)=d_{S^1}(p_n,p_1)=2\pi/n.$$ 
Let ${\mathcal F}_0:=\{\emptyset\}$, ${\mathcal F}_1:=\{\{a\}\mid a\in S^1\}$ and for every $n\geq 2$, let ${\mathcal F}_n$ be the family of all $n$-element subsets of $S^1$ that are evenly spread around the circle.
For every positive integer $n$ which is divisible by $4$, let $\rho_n$ be the rotation of $S^1$ by an angle of $\pi/n$ (counterclockwise) and let
\begin{align*}
{\mathcal F}^{[-1]}_n:=&\{A\setminus\{a\}\mid A\in{\mathcal F}_n,\,a\in A\},\\
{\mathcal F}^{[-2]}_n:=&\{A\setminus\{a,b\}\mid A\in{\mathcal F}_n,\,a,b\in A \text{ such that } d_{S^1}(a,b)\leq\textstyle\frac{\pi}{2} \text{ and }M_{a,b},M_{b,a}\in A\},\\
{\mathcal F}^{[+1]}_n:=&\{A\cup\{a\}\mid A\in{\mathcal F}_n,\, \rho_n(a)\in  A\},\\
{\mathcal F}^{[+2]}_n:=&\{A\cup\{a,b\}\mid A\in{\mathcal F}_n,\, a,b\in S^1 \text{ such that } \rho_n(a),\rho_n(b),M_{a,b},M_{b,a}\in A\}.
\end{align*}
(See some examples in Figure \ref{fig1}; note that ${\mathcal F}^{[-2]}_4$ is empty.)

\begin{figure}
\centering
\begin{tikzpicture} 
\draw[color=magenta] (-1,4) circle [radius=1.5];
\draw[fill=none] (-1,4) node {a set in ${\mathcal F}_8$};
\filldraw[blue] (-1,5.5) circle (2pt) {};
\filldraw[blue] (-1,2.5) circle (2pt) {};
\filldraw[blue] (0.5, 4) circle (2pt) {};
\filldraw[blue] (-2.5, 4) circle (2pt) {}; 
\filldraw[blue] (-2.06,2.94) circle (2pt) {};
\filldraw[blue] (-2.06,5.06) circle (2pt) {};
\filldraw[blue] (0.06,2.94) circle (2pt) {};
\filldraw[blue] (0.06,5.06) circle (2pt) {};

\draw[color=magenta] (3,4) circle [radius=1.5];
\draw[fill=none] (3,4) node {a set in ${\mathcal F}^{[+1]}_8$};
\filldraw[blue] (3,5.5) circle (2pt) {};
\filldraw[blue] (3,2.5) circle (2pt) {};
\filldraw[blue] (4.5, 4) circle (2pt) {};
\filldraw[blue] (1.5, 4) circle (2pt) {}; 
\filldraw[blue] (1.94,2.94) circle (2pt) {};
\filldraw[blue] (1.94,5.06) circle (2pt) {};
\filldraw[blue] (4.06,2.94) circle (2pt) {};
\filldraw[blue] (4.06,5.06) circle (2pt) {};
\filldraw[violet] (3.57,5.39) circle (2pt) {};

\draw[color=magenta] (7,4) circle [radius=1.5]; 
\draw[fill=none] (7,4) node {a set in ${\mathcal F}^{[+2]}_8$};
\filldraw[blue] (7,5.5) circle (2pt) {};
\filldraw[blue] (7,2.5) circle (2pt) {};
\filldraw[blue] (8.5, 4) circle (2pt) {};
\filldraw[blue] (5.5, 4) circle (2pt) {};
\filldraw[blue] (5.94,2.94) circle (2pt) {};
\filldraw[blue] (5.94,5.06) circle (2pt) {};
\filldraw[blue] (8.06,2.94) circle (2pt) {};
\filldraw[blue] (8.06,5.06) circle (2pt) {};
\filldraw[violet] (7.57,5.39) circle (2pt) {};
\filldraw[violet] (8.39, 4.57) circle (2pt) {};

\draw[color=magenta] (-1,0) circle [radius=1.5]; 
\draw[fill=none] (-1,-0) node {a set in ${\mathcal F}^{[-2]}_{12}$};
\draw[red] (-1,1.5) circle (2pt) {};
\filldraw[blue] (-1,-1.5) circle (2pt) {};
\filldraw[blue] (0.5,0) circle (2pt) {};
\filldraw[blue] (-2.5, 0) circle (2pt) {};
\filldraw[blue] (-0.25,1.3) circle (2pt) {};
\filldraw[blue] (0.3,0.75) circle (2pt) {};
\filldraw[blue] (-0.25,-1.3) circle (2pt) {};
\filldraw[blue] (0.3,-0.75) circle (2pt) {};
\filldraw[blue] (-1.75,1.3) circle (2pt) {};
\draw[red] (-2.3,0.75) circle (2pt) {};
\filldraw[blue] (-1.75,-1.3) circle (2pt) {};
\filldraw[blue] (-2.3,-0.75) circle (2pt) {};

\draw[color=magenta] (3,0) circle [radius=1.5]; 
\draw[fill=none] (3,0) node {a set in ${\mathcal F}^{[-1]}_{12}$};
\filldraw[blue] (3,1.5) circle (2pt) {};
\filldraw[blue] (3,-1.5) circle (2pt) {};
\filldraw[blue] (4.5, 0) circle (2pt) {};
\filldraw[blue] (1.5, 0) circle (2pt) {};
\filldraw[blue] (3.75,1.3) circle (2pt) {};
\filldraw[blue] (4.3,0.75) circle (2pt) {};
\filldraw[blue] (3.75,-1.3) circle (2pt) {};
\filldraw[blue] (4.3,-0.75) circle (2pt) {};
\filldraw[blue] (2.25,1.3) circle (2pt) {};
\draw[red] (1.7,0.75) circle (2pt) {};
\filldraw[blue] (2.25,-1.3) circle (2pt) {};
\filldraw[blue] (1.7,-0.75) circle (2pt) {};

\draw[color=magenta] (7,0) circle [radius=1.5]; 
\draw[fill=none] (7,0) node {a set in ${\mathcal F}_{12}$};
\filldraw[blue] (7,1.5) circle (2pt) {};
\filldraw[blue] (7,-1.5) circle (2pt) {};
\filldraw[blue] (8.5, 0) circle (2pt) {};
\filldraw[blue] (5.5, 0) circle (2pt) {};
\filldraw[blue] (7.75,1.3) circle (2pt) {};
\filldraw[blue] (8.3,0.75) circle (2pt) {};
\filldraw[blue] (7.75,-1.3) circle (2pt) {};
\filldraw[blue] (8.3,-0.75) circle (2pt) {};
\filldraw[blue] (6.25,1.3) circle (2pt) {};
\filldraw[blue] (5.7,0.75) circle (2pt) {};
\filldraw[blue] (6.25,-1.3) circle (2pt) {};
\filldraw[blue] (5.7,-0.75) circle (2pt) {};
\end{tikzpicture} 
\caption{}\label{fig1}
\end{figure}

The following claim will be used to prove the lower bound in Theorem \ref{thm:S1}.
\begin{claim}\label{claim:ATMn}
For every nonnegative $n$,
\begin{equation}\label{eq:ATMn}
|\AT {S^1} A|=2n\lfloor n/4\rfloor+n\text{ for any } A\in{\mathcal F}_n,
\end{equation}
and for every positive $n$ which is divisible by $4$,
\begin{subequations}
\begin{align}
|\AT {S^1} A|=&\textstyle\frac{1}{2}(n-1)^2+\textstyle\frac{1}{2}(n-1)-1 &\text{ for any } A\in{\mathcal F}^{[-1]}_n,\label{eq:ATMn-1}\\
|\AT {S^1} A|=&\textstyle\frac{1}{2}(n-2)^2+2 &\text{ for any } A\in{\mathcal F}^{[-2]}_n,\label{eq:ATMn-2}\\
|\AT {S^1} A|=& \textstyle\frac{1}{2}(n+1)^2+\textstyle\frac{1}{2}(n+1) &\text{ for any } A\in{\mathcal F}^{[+1]}_n,\label{eq:ATMn+1}\\
|\AT {S^1} A|=&\textstyle\frac{1}{2}(n+2)^2+2 &\text{ for any } A\in{\mathcal F}^{[+2]}_n.\label{eq:ATMn+2}
\end{align}
\end{subequations}
\end{claim}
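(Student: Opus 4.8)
The plan is to first determine the local structure of $3$-term arithmetic progressions on $S^1$, and then reduce every identity in the claim to a finite count of ``midpoints''. Parametrising $S^1$ as $\mathbb R/2\pi\mathbb Z$, one checks directly that $(a,b,c)$ is a non-constant $3$-term arithmetic progression in $S^1$ if and only if $a\neq c$ and $b$ is a \emph{midpoint} of $\{a,c\}$ (i.e.\ $d_{S^1}(a,b)=d_{S^1}(b,c)=\tfrac12 d_{S^1}(a,c)$), and that $\{a,c\}$ has exactly one midpoint when $d_{S^1}(a,c)<\pi$ and exactly two (antipodal to one another) when $d_{S^1}(a,c)=\pi$. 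Combining this with the involution $(a,b,c)\mapsto(c,b,a)$ of Observation~\ref{obs:parity}, for every finite $A\subseteq S^1$ we get
$$|\AT{S^1}{A}|=|A|+2\sum_{\{a,c\}\subseteq A,\ a\neq c}m_A(\{a,c\}),$$
where $m_A(\{a,c\})\in\{0,1,2\}$ is the number of midpoints of $\{a,c\}$ lying in $A$. Every identity in Claim~\ref{claim:ATMn} will be obtained by evaluating this sum.

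For the sets in ${\mathcal F}_n$, note that such a set is isometric, up to a scaling factor, to the vertex set of the cycle $C_n$, so I would identify it with $\mathbb Z/n\mathbb Z$ under the metric $\|i-j\|:=\min(|i-j|\bmod n,\,n-|i-j|\bmod n)$. Writing $p=b-a$ and $q=c-b$, $(a,b,c)$ is then a non-constant arithmetic progression exactly when $\|p\|=\|q\|$ and $\|p+q\|=2\|p\|$; examining the four sign choices for $(p,q)$ shows that the admissible common step size is $s=\|p\|\in\{1,\dots,\lfloor n/4\rfloor\}$ with $(p,q)\in\{(s,s),(-s,-s)\}$, so there are $2\lfloor n/4\rfloor$ options for $(p,q)$, each yielding $n$ ordered progressions as $a$ runs over $\mathbb Z/n\mathbb Z$. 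With the $n$ constant progressions this gives $|\AT{S^1}{A}|=2n\lfloor n/4\rfloor+n$, which is \eqref{eq:ATMn}. (When $4\mid n$, an antipodal pair has both of its midpoints on the cycle, but the two progressions it produces are exactly the two $s=n/4$ progressions, so nothing further need be counted.)

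For the perturbed families (all requiring $4\mid n$), I would rescale $S^1$ to circumference $2n$ and realise ${\mathcal F}_n$ as the even residues of $\mathbb Z/2n\mathbb Z$; then the midpoints of consecutive points of ${\mathcal F}_n$ are the odd residues, the rotation $\rho_n$ is the shift by $1$, and the arc-midpoints $M_{a,b},M_{b,a}$ are read off at once. In these coordinates, up to rotation and reflection: a set in ${\mathcal F}^{[-1]}_n$ has the form ${\mathcal F}_n$ with one point deleted; a set in ${\mathcal F}^{[+1]}_n$ has the form ${\mathcal F}_n$ with one odd residue adjoined; a set in ${\mathcal F}^{[-2]}_n$ has the form ${\mathcal F}_n\setminus\{0,2s\}$ for some even $s$ with $2\le s\le n/4$ (so ${\mathcal F}^{[-2]}_4=\emptyset$); and a set in ${\mathcal F}^{[+2]}_n$ has the form ${\mathcal F}_n$ with two odd residues, whose distance is twice an odd integer, adjoined --- the conditions ``$\rho_n(a)\in A$'', ``$d_{S^1}(a,b)\le\pi/2$'' and ``$M_{a,b},M_{b,a}\in A$'' being precisely what pins down these positions. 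I would then use
$$|\AT{S^1}{A'\cup\{x\}}|=|\AT{S^1}{A'}|+1+2P_1+P_2,$$
where $P_1=\sum_{c\in A'}m_{A'}(\{x,c\})$ counts the non-constant progressions $(x,b,c)$ with $b,c\in A'$ and $P_2=2\cdot\#\{\{a,c\}\subseteq A':x\text{ is a midpoint of }\{a,c\}\}$ those with $x$ in the middle, together with the analogous formula for deletion. Applying this once or twice starting from \eqref{eq:ATMn}, with each $P_1,P_2$ reduced to a short count of residues at prescribed distances, gives \eqref{eq:ATMn-1}--\eqref{eq:ATMn+2}.

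I expect the main obstacle to be Step~3, and within it the $[\pm 2]$ cases. The points requiring care are: separating the progressions according to whether an added or removed point sits at an endpoint or in the middle; counting correctly the progressions that involve \emph{both} added points (respectively, both removed points); and treating the pairs at distance exactly $\pi$, which contribute two midpoints instead of one --- this last phenomenon is exactly why the definitions of ${\mathcal F}^{[\pm 2]}_n$ are phrased through $M_{a,b}$ and $M_{b,a}$. Once these are organised --- e.g.\ by fixing the normal forms above and tabulating, for each relevant pair, its distance and the parity of its midpoint(s) --- the four identities follow by elementary if somewhat lengthy arithmetic, which I would cross-check against small cases such as $n=4$ and $n=8$.
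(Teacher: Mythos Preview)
Your approach is correct and will yield all five identities, but it differs in execution from the paper's proof. You and the paper both obtain \eqref{eq:ATMn} by a direct count (the paper via $w_A(b)=2\lfloor n/4\rfloor+1$, you via enumerating step sizes; these are equivalent). For the perturbed families, however, the paper avoids almost all residue bookkeeping by introducing $\Gamma_A(a)=\{(x,y,z)\in\AT{S^1}{A}:a\in\{x,y,z\}\}$ and exploiting the transitivity of the symmetry group of $A\in{\mathcal F}_n$: from $\sum_{a\in A}(|\Gamma_A(a)|-1)=3(|\AT{S^1}{A}|-n)$ one gets $|\Gamma_A(a)|=\tfrac32 n+1$ for every $a$, which immediately gives \eqref{eq:ATMn-1}, and \eqref{eq:ATMn-2} then follows by a single inclusion--exclusion once one checks $|\Gamma_A(a)\cap\Gamma_A(b)|=6$. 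For the additions the paper observes that a point $a$ with $\rho_n(a)\in A$ can only occur as the \emph{middle} term of a progression with the other two terms in $A$ (a parity obstruction), so $|\Gamma_{A\cup\{a\}}(a)|=w_{A\cup\{a\}}(a)=\tfrac12 n+1$, which gives \eqref{eq:ATMn+1} and, with one more inclusion--exclusion, \eqref{eq:ATMn+2}.

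What this buys: the paper never needs to tabulate distances or midpoint parities case by case, and the only genuinely new counts are the two small intersection numbers ($6$ and $2$). Your route via explicit $P_1,P_2$ is more hands-on and, as you anticipate, the $[\pm2]$ cases will require some care with antipodal pairs and with progressions involving both modified points; but nothing is wrong with it, and your normal forms for the four families are accurate. If you want to streamline, the symmetry shortcut $|\Gamma_A(a)|=\tfrac{3}{n}(|\AT{S^1}{A}|-n)+1$ is worth adopting.
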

\begin{proof}
If $A\in{\mathcal F}_n$, then $w_A(a)=2\lfloor n/4\rfloor+1$  for every $a\in A$ and \eqref{eq:ATMn} follows, by \eqref{eq:sum_w}.

For every finite set $A$ of points in $S^1$ and $a\in A$, denote
$$\Gamma_A(a):=\left\{(x,y,z)\in\AT {S^1} A\mid a\in\{x,y,z\}\right\}.$$

Suppose that $n>0$ is divisible by $4$, and let $A\in{\mathcal F}_n$. 
Clearly, using \eqref{eq:ATMn},
$$\sum_{a\in A}\left(|\Gamma_A(a)|-1\right)=3\left(|\AT {S^1} A|-n\right)=\textstyle\frac{3}{2}n^2,$$
and hence, by symmetry, $|\Gamma_A(a)|=\textstyle\frac{3}{2}n+1$ for every $a\in A$.
Therefore, for every $a\in A$, again by \eqref{eq:ATMn},
$$
|\AT {S^1} {A\setminus\{a\}}|=|\AT {S^1} A|-|\Gamma_A(a)|=\left(\textstyle\frac{1}{2}n^2+n\right)-\left(\textstyle\frac{3}{2}n+1\right)=\textstyle\frac{1}{2}(n-1)^2+\textstyle\frac{1}{2}(n-1)-1 .
$$
which proves \eqref{eq:ATMn-1}. Similarly, if $a,b$ are points in $A$ such that $d_{S^1}(a,b)\leq\frac{\pi}{2}$ and $M_{a,b},M_{b,a}\in A$, then
\begin{align*}|\AT {S^1} {A\setminus\{a,b\}}|&=|\AT {S^1} A|-|\Gamma_A(a)|-|\Gamma_A(b)|+|\Gamma_A(a)\cap\Gamma_A(b)|\\
&=\left(\textstyle\frac{1}{2}n^2+n\right)-2\left(\textstyle\frac{3}{2}n+1\right)+6=\textstyle\frac{1}{2}(n-2)^2+2 .
\end{align*}
which proves \eqref{eq:ATMn-2}. 
For every $a\in S^1$ such that $\rho_n(a)\in A$, it holds that $|\Gamma_{A\cup\{a\}}(a)|=w_{A\cup\{a\}}(a)=\frac{1}{2}n+1$ and hence, once more by \eqref{eq:ATMn}, 
$$
|\AT {S^1} {A\cup\{a\}}|=|\AT {S^1} A|+|\Gamma_{A\cup\{a\}}(a)|=\left(\textstyle\frac{1}{2}n^2+n\right)+\left(\textstyle\frac{1}{2}n+1\right)=\textstyle\frac{1}{2}(n+1)^2+\textstyle\frac{1}{2}(n+1) .
$$
which proves \eqref{eq:ATMn+1}. Similarly, if $a,b$ are points in $S^1$ such that $\rho_n(a),\rho_n(b),M_{a,b},M_{b,a}\in A$ (in particular, $a,b$ are not antipodal), then
\begin{align*}|\AT {S^1} {A\cup\{a,b\}}|&=|\AT {S^1} A|+|\Gamma_{A\cup\{a\}}(a)|+|\Gamma_{A\cup\{b\}}(b)|+|\Gamma_{A\cup\{a,b\}}(a)\cap\Gamma_{A\cup\{a,b\}}(b)|\\
&=\left(\textstyle\frac{1}{2}n^2+n\right)+2\left(\textstyle\frac{1}{2}n+1\right)+2=\textstyle\frac{1}{2}(n+2)^2+2 .
\end{align*}
which proves \eqref{eq:ATMn+2}.
\end{proof}

For every $a\in S^1$ denote, for simplicity,
$$H_a:=B_{S^1}(a; \textstyle\frac{\pi}{2})\left(=\{x\in S^1\mid d_{S^1}(a,x)\leq\textstyle\frac{\pi}{2}\}\right)$$
and let $R_a$ be the restriction to $S^1$ of the Euclidean reflection of ${\mathbb R}^2$ through the line connecting $a$ to the origin.

For a finite set $A$ of points in $S^1$, let
$$\Pairs(A):=\left\{\{a,b\}\subseteq A\mid a\neq b,\, \big\lvert\lvert A\cap C_{a,b}\rvert-\lvert A\cap C_{b,a}\rvert\big\rvert\leq 1\right\},$$
and let $\Pairs_0(A)$ be the set of pairs $\{a,b\}$ in $\Pairs(A)$ for which the following conditions hold:
\begin{itemize}
\item The points $a,b$ are antipodal (in particular, $R_a=R_b$).
\item The set $A$ is invariant under the reflection $R_a=R_b$ (in particular, $|A|$ is necessarily even).
\item The points $M_{a,b},M_{b,a}$ are in $A$.
\end{itemize}

For a finite set $A$ of points in $S^1$, recall that for any $b\in A$ we denote by $w_A(b)$ the cardinality of the set $\{(x,y,z)\in\AT {S^1} A\mid y=b\}$, and note that $\frac{1}{2}w_A(a)+\frac{1}{2}w_A(b)$ is an integer for every $a,b\in A$, by Observation \ref{obs:parity}.

\begin{lemma}\label{lem:ww}
Let $A$ be a set of $n$ points in $S^1$ and suppose that $\{a,b\}\in \Pairs(A)$. Then,
\begin{equation}\label{eq:ww_concise}
\textstyle\frac{1}{2}w_A(a)+\frac{1}{2}w_A(b)\leq \left\lfloor\frac{n}{2}\right\rfloor+1.
\end{equation}
Moreover, if $n$ is even and $\frac{1}{2}w_A(a)+\frac{1}{2}w_A(b)=\frac{n}{2}+1$, then $\{a,b\}\in \Pairs_0(A)$.

Finally, if the arc $C_{a,b}$ is longer than the arc $C_{b,a}$ and $\textstyle\frac{1}{2}w_A(a)+\frac{1}{2}w_A(b)\geq \lceil\frac{n}{2}\rceil$, then $A\cap H_a\cap C_{a,b}\subseteq R_a(A)$ and $A\cap H_b\cap C_{a,b}\subseteq R_b(A)$.
\end{lemma}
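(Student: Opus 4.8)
The plan is to reduce all three assertions to a single inequality about matchings on $A$, and then to prove that inequality geometrically, using the balance hypothesis $\{a,b\}\in\Pairs(A)$ to control the overlap of the two matchings; the equality case of \eqref{eq:ww_concise} and the final assertion will then fall out by tracking when the geometric estimates are (nearly) tight.

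\emph{Reformulation.} In a nonconstant $3$-term arithmetic progression $(x,c,z)$ in $S^1$ one has $d_{S^1}(x,c)=d_{S^1}(c,z)=\delta$ and $d_{S^1}(x,z)=2\delta\le\pi$, so $\delta\le\pi/2$, and then $x$ and $z$ must be the two points at distance $\delta$ from $c$ lying on opposite sides of the diameter through $c$, i.e.\ $z=R_c(x)$ with $x\in H_c$. Hence $w_A(c)=|A\cap H_c\cap R_c(A)|$; writing $m_c$ for the number of unordered pairs $\{x,R_c(x)\}\subseteq A$ with $0<d_{S^1}(c,x)\le\pi/2$ we get $w_A(c)=2m_c+1$, so $\tfrac12 w_A(a)+\tfrac12 w_A(b)=1+m_a+m_b$. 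Two distinct such pairs with the same centre are vertex-disjoint (sharing a point would force them equal), so they form a matching of $A$ covering a set $V_a$ of $2m_a$ points, and likewise $V_b$ of $2m_b$ points. Thus \eqref{eq:ww_concise} is equivalent to $|V_a|+|V_b|\le n$; for even $n$ the equality $\tfrac12 w_A(a)+\tfrac12 w_A(b)=\tfrac n2+1$ is equivalent to $|V_a|+|V_b|=n$; and (as $C_{a,b}$ contains neither $a$ nor $b$) the two inclusions in the final assertion are equivalent to $A\cap H_a\cap C_{a,b}\subseteq V_a$ and $A\cap H_b\cap C_{a,b}\subseteq V_b$.

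\emph{The core estimate.} Normalise $a=0$, let $b$ be the coordinate of the second point, and let $S,L$ be the shorter and longer of the two arcs $C_{a,b},C_{b,a}$, so $\{a,b\}\in\Pairs(A)$ reads $\bigl||A\cap S|-|A\cap L|\bigr|\le 1$. Set $I:=H_a\cap H_b$ and $D:=S^1\setminus(H_a\cup H_b)$, two antipodal arcs of equal length $\pi-d_{S^1}(a,b)$ (for antipodal $a,b$, $I=\{M_{a,b},M_{b,a}\}$ and $D=\emptyset$). Now $|V_a|+|V_b|\le n$ is equivalent to $|V_a\cap V_b|\le|A\setminus(V_a\cup V_b)|$, i.e.\ the ``doubly covered'' points are outnumbered by the ``uncovered'' ones; here $V_a\cap V_b\subseteq A\cap I$, while $A\setminus(V_a\cup V_b)$ contains $A\cap D$, possibly $a$ and/or $b$, and every ``defect'' (a point of $A\cap H_a$ whose $R_a$-image is not in $A$, or its $b$-analogue). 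The geometric heart is that in the regime $d_{S^1}(a,b)\in(\pi/2,\pi)$, $R_a$ and $R_b$ carry $I$ into two disjoint sub-arcs of $L$ next to its endpoints, so $R_a,R_b$ embed $V_a\cap V_b$ into disjoint subsets of $A\cap L$; combined with the balance condition and a bookkeeping of which points of $L$ lie in $D$, this yields the required inequality. The case $d_{S^1}(a,b)=\pi$ is immediate ($R_a=R_b$ on $I$, so $|V_a\cap V_b|\le|A\cap I|\le 2$, while $a,b\notin V_a\cup V_b$ provide two uncovered points), and the case $d_{S^1}(a,b)\le\pi/2$ is a variant in which $I$ overflows $S$ and $a,b$ may themselves be endpoints of good pairs.

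\emph{Equality, near-equality, and the main obstacle.} If $|V_a|+|V_b|=n$, every estimate above is tight: the arc counts must be exactly balanced, $A\cap D$ must be empty with $a,b\notin V_a\cup V_b$ (a genuine remote arc would create slack), which forces $a,b$ antipodal, and then $A$ must be invariant under $R_a=R_b$ with $M_{a,b},M_{b,a}\in A$, i.e.\ $\{a,b\}\in\Pairs_0(A)$. For the final assertion, $\tfrac12 w_A(a)+\tfrac12 w_A(b)\ge\lceil n/2\rceil$ together with \eqref{eq:ww_concise} confines $m_a+m_b$ to within $1$ of its maximum; feeding that unit of slack back into the core estimate shows no defect can lie on the $C_{a,b}$-side of $a$ or of $b$, which is exactly $A\cap H_a\cap C_{a,b}\subseteq R_a(A)$ and $A\cap H_b\cap C_{a,b}\subseteq R_b(A)$. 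The main obstacle is the core estimate itself: since $R_a(I)$ and $R_b(I)$ lie inside $L$ but not inside the remote arc $D$, one cannot compare cardinalities arc-by-arc, and must instead follow exactly how $R_a,R_b$ transport the points of $V_a\cap V_b$ and balance the portions of $L$ that do and do not meet $D$ — with the sub-case $d_{S^1}(a,b)\le\pi/2$ (where $I$ engulfs $S$) the most delicate bookkeeping.
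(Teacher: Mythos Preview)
Your reformulation in terms of the matchings $V_a,V_b$ is correct and pleasant, and the antipodal case $d_{S^1}(a,b)=\pi$ is indeed immediate. The gap is in the ``core estimate'' for the non-antipodal regime: the route you propose does not close.

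Concretely, from the (correct) embedding $R_a(V_a\cap V_b)\sqcup R_b(V_a\cap V_b)\subseteq A\cap L$ you only get $2|V_a\cap V_b|\le |A\cap L|$, while what you need is $|V_a\cap V_b|\le |A\setminus(V_a\cup V_b)|$. Your list of ``uncovered'' points does not bridge this: points of $A\cap D$ and $\{a,b\}$ are genuinely uncovered (when $d>\pi/2$), but an $a$-defect --- a point $x\in A\cap H_a$ with $R_a(x)\notin A$ --- can perfectly well lie in $V_b$, so defects are \emph{not} automatically in $A\setminus(V_a\cup V_b)$. And there is no inequality linking $|A\cap L|$ to $2+|A\cap D|$ in general, so the ``balance plus bookkeeping'' step is a real missing argument, not just a routine computation. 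You also defer the case $d_{S^1}(a,b)\le \pi/2$ entirely.

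The paper avoids the inclusion--exclusion detour by counting directly on the long arc. After arranging $C_{a,b}$ to be the longer arc, it observes that every $a$-pair (and every $b$-pair) has at least one endpoint in $C_{a,b}$, whence
\[
m_a+m_b\le |A\cap R_a(A)\cap H_a\cap C_{a,b}|+|A\cap R_b(A)\cap H_b\cap C_{a,b}|\le |A\cap H_a\cap C_{a,b}|+|A\cap H_b\cap C_{a,b}|.
\]
The single geometric fact $H_a\cap H_b\cap C_{a,b}=\emptyset$ (non-antipodal) or $=\{M_{a,b}\}$ (antipodal) then gives $m_a+m_b\le |A\cap C_{a,b}|$ or $\le |A\cap C_{a,b}|+1$, and the balance hypothesis bounds $|A\cap C_{a,b}|$. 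This handles all distances $d_{S^1}(a,b)$ uniformly, with no case split at $\pi/2$, and the equality analysis and the ``final assertion'' drop out simply because the only slack is in the second inequality of the display above. In your language, the point is that the \emph{$L$-halves} of $V_a$ and $V_b$ already sit in disjoint sub-arcs $H_a\cap C_{a,b}$ and $H_b\cap C_{a,b}$ of the long arc, so one should bound $m_a+m_b=|V_a\cap C_{a,b}|+|V_b\cap C_{a,b}|$ there directly rather than pass through $V_a\cap V_b$.
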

\begin{proof}
We may assume that the arc $C_{a,b}$ is at least as long as the arc $C_{b,a}$, and that if the two arcs have the same length then $|A\cap C_{a,b}|\leq|A\cap C_{b,a}|$.

If $(b,b,b)\neq (x,b,y)\in \AT {S^1} A$, then $x,y$ are both in $A\cap R_b(A)\cap H_b$ and at least one of them is in the arc $C_{a,b}$. Hence, $w_A(b)\leq 1+2|A\cap R_b(A)\cap H_b\cap C_{a,b}|$. Similarly, $w_A(a)\leq 1+2|A\cap R_a(A)\cap H_a\cap C_{a,b}|$ and hence,
\begin{align}\textstyle\frac{1}{2}w_A(a)+\textstyle\frac{1}{2}w_A(b)&\leq 1+ |A\cap R_a(A)\cap H_a\cap C_{a,b}|+|A\cap R_b(A)\cap H_b\cap C_{a,b}| \nonumber\\
&\leq 1+ |A\cap H_a\cap C_{a,b}|+|A\cap H_b\cap C_{a,b}|\label{eq:ww_complete},
\end{align}
where equality occurs in the second inequality if and only if $A\cap H_a\cap C_{a,b}\subseteq R_a(A)$ and $A\cap H_b\cap C_{a,b}\subseteq R_b(A)$.

If the points $a,b$ are antipodal, then $H_a\cup H_b=S^1$, $ H_a\cap H_b=\{M_{a,b}, M_{b,a}\}$, $|A\cap C_{a,b}|=\lfloor\frac{n-2}{2}\rfloor$ and hence,
\begin{equation}\label{eq:antipodal1}
 |A\cap H_a\cap C_{a,b}|+|A\cap H_b\cap C_{a,b}|=|A\cap C_{a,b}|+|A\cap\{M_{a,b}\}|\leq\left\lfloor\textstyle\frac{n-2}{2}\right\rfloor+1=\left\lfloor\textstyle\frac{n}{2}\right\rfloor.
\end{equation}
If the points $a,b$ are not antipodal, then the set $C_{a,b}\cap H_a\cap H_b$ is empty and hence
\begin{equation}\label{eq:notantipodal1}
 |A\cap H_a\cap C_{a,b}|+|A\cap H_b\cap C_{a,b}|\leq |A\cap C_{a,b}|\leq\left\lceil\textstyle\frac{n-2}{2}\right\rceil=\left\lceil\textstyle\frac{n}{2}\right\rceil-1.\end{equation}
Combining \eqref{eq:ww_complete}, \eqref{eq:notantipodal1} and \eqref{eq:antipodal1} yields \eqref{eq:ww_concise}.

Suppose now that $n$ is even and $\frac{1}{2}w_A(a)+\frac{1}{2}w_A(b)=\frac{n}{2}+1$. By \eqref{eq:ww_complete} and \eqref{eq:notantipodal1}, the points $a,b$ are necessarily antipodal. Then, equality must occur in \eqref{eq:antipodal1}, i.e., $M_{a,b}\in A$, and the inequalities in \eqref{eq:ww_complete} are necessarily equalities as well; in particular, $A\cap H_a\cap C_{a,b}\subseteq R_a(A)$ and $A\cap H_b\cap C_{a,b}\subseteq R_b(A)$.
Therefore, since $H_a\cup H_b=S^1$ and $R_a=R_b$, 
$$
A\cap C_{a,b}=\left(A\cap H_a\cap C_{a,b}\right)\cup\left(A\cap H_b\cap C_{a,b}\right)\subseteq R_a(A)\cup R_b(A)=R_b(A),
$$
and hence, since $C_{a,b}=R_b(C_{b,a})$,
$$A\cap C_{a,b}\subseteq R_b(A)\cap C_{a,b}=R_b(A\cap C_{b,a}).$$
Therefore, since $|A\cap C_{a,b}|=|A\cap C_{b,a}|=|R_b(A\cap C_{b,a})|$, necessarily $A\cap C_{a,b}=R_b(A\cap C_{b,a})$, i.e., the set $A$ is invariant under the reflection $R_a=R_b$.
In particular, since $M_{a,b}\in A$, it also holds that $M_{b,a}=R_b(M_{a,b})\in A$.
In conclusion, $\{a,b\}\in \Pairs_0(A)$.

Finally, suppose that the arc $C_{a,b}$ is longer than the arc $C_{b,a}$ and that $\textstyle\frac{1}{2}w_A(a)+\frac{1}{2}w_A(b)\geq \lceil\frac{n}{2}\rceil$. 
In particular, the points $a,b$ are not antipodal. Then, by \eqref{eq:notantipodal1}, the inequalities in \eqref{eq:ww_complete} are necessarily equalities. In particular, $A\cap H_a\cap C_{a,b}\subseteq R_a(A)$ and $A\cap H_b\cap C_{a,b}\subseteq R_b(A)$.
\end{proof}

\begin{obs}\label{obs:evenly}
Let $A$ be a finite subset of $S^1$ such that $r:=|\Pairs_0(A)|>1$.
Then, 
$$\Pairs_0(A)=\left\{\{p_1,p_{r+1}\},\{p_2,p_{r+2}\},\ldots, \{p_r,p_{2r}\}\right\},$$
where the points $p_1,p_2,\ldots,p_{2r}$ are ordered counterclockwise and the set $\{p_1,p_2,\ldots,p_{2r}\}$ is evenly spread around the circle.
\end{obs}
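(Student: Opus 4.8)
The plan is to study the group $G$ of isometries of $S^1$ that preserve $A$, and to show that the reflections in $G$ arising from $\Pairs_0(A)$ form a union of conjugacy classes of $G$, which forces their axes — and hence the endpoints $p_i$ — to be evenly spread. As a preliminary remark, note that each pair $\{a,b\}\in\Pairs_0(A)$ determines the reflection $R_a=R_b$, whose axis is the line through the antipodal points $a,b$ and under which $A$ is invariant; since an antipodal pair of points of $S^1$ is recovered from its axis, distinct pairs yield distinct reflections, so $\{a,b\}\mapsto R_a$ is a bijection from $\Pairs_0(A)$ onto a set $\mathcal R$ of exactly $r$ reflections, all lying in $G$.

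The key step is a closure property: if $\{a,b\}\in\Pairs_0(A)$ and $g\in G$, then $\{g(a),g(b)\}\in\Pairs_0(A)$. Indeed $g(a),g(b)\in A$ are antipodal, $R_{g(a)}=gR_ag^{-1}$ preserves $A$, and $g$ maps the unordered pair $\{M_{a,b},M_{b,a}\}$ onto $\{M_{g(a),g(b)},M_{g(b),g(a)}\}$ (which midpoint corresponds to which depends on whether $g$ preserves or reverses orientation, but in either case both land in $A$); moreover the defining inequality of $\Pairs(A)$ holds automatically once the three conditions of $\Pairs_0(A)$ do, since $R_a$ interchanges the two arcs determined by $a,b$. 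Consequently $\mathcal R$ is closed under conjugation by $G$.

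Now, since $r>1$ the set $A$ is finite and nonempty, so $G$ is finite; as it contains a reflection it is a dihedral group, of order $2m$ say, with $m\geq2$ (because $|\mathcal R|>1$). Its rotation subgroup is generated by the rotation through $2\pi/m$, and its $m$ reflections have axes evenly spread at angular distance $\pi/m$. It is standard that these reflections form a single conjugacy class of $G$ when $m$ is odd, and two classes of $m/2$ reflections each — with the axes in each class evenly spread at angular distance $2\pi/m$ — when $m$ is even. Since $\mathcal R$ is a nonempty conjugation-closed set of reflections of $G$ with $|\mathcal R|=r>1$, in every case either $\mathcal R$ is the full set of $m$ reflections (so $r=m$) or $m$ is even and $\mathcal R$ is one class (so $r=m/2$); in both cases the $r$ axes of the reflections in $\mathcal R$ are evenly spread at angular distance $\pi/r$.

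Finally, each such axis meets $S^1$ in a pair of antipodal points of $A$, and as the axis ranges over $\mathcal R$ these $2r$ points are evenly spread around the circle; labelling them $p_1,\dots,p_{2r}$ counterclockwise, the axis through $p_i$ carries the pair $\{p_i,p_{i+r}\}$, giving exactly the asserted description of $\Pairs_0(A)$. The part requiring care is the closure property above and the (elementary but case-sensitive) description of the conjugacy classes of reflections in a finite dihedral group; granting these, the conclusion is immediate.
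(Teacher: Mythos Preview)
Your proof is correct and takes a genuinely different route from the paper's. Both arguments rest on the same key observation --- that any isometry $g$ of $S^1$ preserving $A$ permutes the pairs in $\Pairs_0(A)$ (the paper phrases this as: the set $\{p_1,\ldots,p_{2r}\}$ is $g$-invariant). From there the approaches diverge. You pass to the symmetry group $G$ of $A$, identify it as a finite dihedral group of order $2m$, and invoke the conjugacy-class structure of reflections in $D_m$ to force the axes of the reflections in $\mathcal R$ to be evenly spaced. The paper instead argues directly by contradiction: if the $p_i$ were not evenly spread, there would be adjacent gaps of unequal length, say $d_{S^1}(p_i,p_{i+1}) < d_{S^1}(p_{i+1},p_{i+2})$, and then the reflection $R_{p_{i+1}}$ (which preserves $A$) would send $p_i$ into the open arc $C_{p_{i+1},p_{i+2}}$, producing a point outside $\{p_1,\ldots,p_{2r}\}$ and contradicting invariance. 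The paper's argument is shorter and entirely self-contained, needing no group-theoretic input; your approach is more structural and situates the result as an instance of a general fact about conjugation-closed sets of reflections in dihedral groups, which is illuminating and would adapt well to related settings.
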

\begin{proof}
Since $\Pairs_0(A)$ is a set of $r$ pairs of antipodal points, it follows that  
$$\Pairs_0(A)=\left\{\{p_1,p_{r+1}\},\{p_2,p_{r+2}\},\ldots, \{p_r,p_{2r}\}\right\},$$
where the points $p_1,p_2,\ldots,p_{2r}$ are ordered counterclockwise.
Observe that if the set $A$ is invariant under some isometry $\rho$ of $S^1$, then by the definition of $\Pairs_0(A)$, the set $\{p_1,p_2,\ldots,p_{2r}\}$ is invariant under $\rho$ as well.
If the set $\{p_1,p_2,\ldots,p_{2r}\}$ is not evenly spread around the circle, then there is $1\leq i\leq r$ such that $d_{S^1}(p_i,p_{i+1})\neq d_{S^1}(p_{i+1},p_{i+2})$. With no loss of generality, assume that  $d_{S^1}(p_i,p_{i+1})< d_{S^1}(p_{i+1},p_{i+2})$. Then, $R_{p_{i+1}}(p_i)$ is in the open arc $C_{p_{i+1},p_{i+2}}$ and hence $R_{p_{i+1}}(p_i)\notin\{p_1,p_2,\ldots,p_{2r}\}$. Therefore, the set $\{p_1,p_2,\ldots,p_{2r}\}$ is not invariant under the reflection $R_{p_{i+1}}$, contradicting the invariance of $A$ under $R_{p_{i+1}}$.
\end{proof}

\begin{lemma}\label{lem:structure}
Let $A$ be a set of $n$ points in $S^1$ such that $n\bmod 4=2$ and $|\Pairs_0(A)|>1$. 
There is a set $\Pairs_1\subseteq \Pairs(A)\setminus \Pairs_0(A)$ such that $|\Pairs_1|\geq |\Pairs_0(A)|$ and for every $\{a,b\}\in \Pairs_1$,
$$\textstyle\frac{1}{2}w_A(a)+\frac{1}{2}w_A(b)\leq \frac{n}{2}-1.$$
\end{lemma}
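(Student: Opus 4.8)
The plan is to first read off the rigid symmetry that the hypothesis forces on $A$, and then to produce $\Pairs_1$ by perturbing the $r$ antipodal pairs of $\Pairs_0(A)$ into nearly antipodal ones, certifying ``cheapness'' by re‑examining the inequalities inside the proof of Lemma \ref{lem:ww}.

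\emph{Step 1 (structure of $A$).} By Observation \ref{obs:evenly}, $\Pairs_0(A)=\{\{p_1,p_{r+1}\},\dots,\{p_r,p_{2r}\}\}$ where $p_1,\dots,p_{2r}\in A$ are ordered counterclockwise, evenly spread, and $p_i,p_{r+i}$ are antipodal. By the definition of $\Pairs_0(A)$, $A$ is invariant under every reflection $R_{p_i}$ and contains $M_{p_i,p_{r+i}}$ and $M_{p_{r+i},p_i}$. The reflections $R_{p_1},\dots,R_{p_r}$ generate the dihedral group $D_r$ of order $2r$, so $A$ is a union of $D_r$‑orbits; on $S^1$ these have size $r$ — the orbits of size $r$ are precisely the two into which the $2r$ points on the $r$ reflection lines (i.e.\ $p_1,\dots,p_{2r}$) split — or size $2r$. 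Since $\{p_1,\dots,p_{2r}\}\subseteq A$ we get $n=2r(1+c)$ with $c$ the number of size‑$2r$ orbits inside $A$, and from $n\bmod4=2$ it follows that $r$ is odd and $c$ is even. As $r$ is odd, each $M_{p_i,p_{r+i}}$ is the midpoint of two consecutive $p_j$'s, so $A$ contains the evenly spread $4r$‑point set $Q$ obtained by inserting these $2r$ midpoints; since $Q\setminus\{p_1,\dots,p_{2r}\}$ is itself a $D_r$‑orbit of size $2r$ we have $c\ge2$, hence $n\ge6r$ and $A\supsetneq Q$. Finally, writing the points of $A$ counterclockwise as $x_1,\dots,x_n$, one checks (for any even $n$) that $\Pairs(A)=\bigl\{\{x_j,x_{j+n/2}\}:1\le j\le\tfrac n2\bigr\}$, since shifting an endpoint by one step unbalances the two arc counts; thus $|\Pairs(A)|=n/2\ge3r$, and the candidates for $\Pairs_1$ are the (at least $2r$) such pairs lying outside $\Pairs_0(A)$.

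\emph{Step 2 (when a pair is cheap, and the selection).} A pair $\{a,b\}\in\Pairs(A)\setminus\Pairs_0(A)$ (necessarily non‑antipodal) satisfies $\frac12 w_A(a)+\frac12 w_A(b)\le\frac n2-1$ unless it is ``almost extremal'': re‑running \eqref{eq:ww_complete}--\eqref{eq:notantipodal1} for a non‑antipodal pair with $C_{a,b}$ the longer arc, the value can equal $\frac n2$ only if $A\cap C_{a,b}\subseteq H_a\cup H_b$ \emph{and} the inclusions $A\cap H_a\cap C_{a,b}\subseteq R_a(A)$, $A\cap H_b\cap C_{a,b}\subseteq R_b(A)$ both hold, and by Lemma \ref{lem:ww} it never equals $\frac n2+1$; so $\{a,b\}$ is cheap as soon as some point of $A$ lies in $C_{a,b}$ at distance $>\tfrac\pi2$ from both $a$ and $b$, or one of the two inclusions fails. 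The extra $D_r$‑orbits of $A$ supply the needed asymmetry. Let $s$ be the largest integer with $A$ invariant under the rotation by $2\pi/s$; then $s<\infty$, $r\mid s$, the isometry group of $A$ is the dihedral group $D_s$, and $A$ is invariant under $R_q$ for no point $q$ off the $s$ reflection lines; moreover $A$ contains a $D_s$‑orbit of size $2s$, since otherwise $A$ would lie on the $s$ reflection lines, forcing $A$ evenly spread with $n\in\{s,2s\}$ and hence $4\mid n$ (as $4r\mid n$), contradicting $n\bmod4=2$. For each $i$, perturb $\{p_i,p_{r+i}\}$ by replacing one endpoint with the point of $A$ immediately beyond it (in a suitably chosen direction), moving by less than the smallest gap of $A$, so that the two arc counts stay equal to $\tfrac n2-1$ and the pair remains in $\Pairs(A)$; being non‑antipodal it is outside $\Pairs_0(A)$. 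Choosing the direction so that a point of the size‑$2s$ orbit (translated by a suitable element of $D_r$) lands in the relevant quarter‑circle $H_a\cap C_{a,b}$ or $H_b\cap C_{a,b}$ with its reflection outside $A$, the pair is cheap by the criterion above; keeping the $r$ perturbations in the $r$ distinct sectors around $p_1,\dots,p_r$ makes them distinct, and $\Pairs_1$ is taken to be their collection.

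\emph{Step 3 (the main obstacle).} The substance is entirely in Step 2: one must arrange the $r$ perturbed pairs so that simultaneously (a) each stays in $\Pairs(A)$ — the perturbation is below the minimal gap of $A$ and the arrangement of $A$ near the original antipodal axis is symmetric enough that both arc counts remain $\tfrac n2-1$; (b) each is genuinely cheap — a non‑special point of $A$ sits in the correct quarter‑circle (or in the middle of the longer arc) and is not matched by its reflection; and (c) the $r$ pairs are distinct and avoid $\Pairs_0(A)$. Carrying this out calls for a short case analysis on how the size‑$2s$ orbit(s) of $A$ sit relative to $p_1,\dots,p_{2r}$ — equivalently, on $s$ and on whether $A$ is antipodally symmetric — and the congruence $n\bmod4=2$ is exactly what rules out the over‑symmetric configurations (such as $A=Q$) that would leave no cheap near‑antipodal pair available.
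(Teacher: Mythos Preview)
Your Step~1 recovers the same structural picture as the paper (the $p_i$ evenly spread, $r$ odd, $A$ invariant under $D_r$, the midpoints $M_{p_i,p_{i+1}}$ in $A$), and your criterion for cheapness at the start of Step~2 is exactly the last clause of Lemma~\ref{lem:ww}. The difficulty is that your actual construction of $\Pairs_1$ does not work as written, and the verification is never carried out.

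Concretely: you propose to ``replace one endpoint'' of $\{p_i,p_{r+i}\}$ by the adjacent point of $A$. But you yourself observed that for even $n$ one has $\Pairs(A)=\{\{x_j,x_{j+n/2}\}\}$; moving a single endpoint by one step produces $\{x_j,x_{j+n/2\pm1}\}$, which has arc counts $\tfrac{n}{2}-2$ and $\tfrac{n}{2}$ and is therefore \emph{not} in $\Pairs(A)$. So the perturbed pairs you describe are not even candidates. If instead you shift both endpoints (to the neighbouring pair $\{x_{j+1},x_{j+n/2+1}\}$ in $\Pairs(A)$), you must then argue that this pair is non-antipodal and cheap; you do not do this, and it is not automatic---the $D_r$-symmetry can make the neighbours of $p_i$ and of $p_{r+i}$ equidistant from them. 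Your Step~3 openly defers the required case analysis, so the proof is incomplete on its own terms.

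The paper avoids all of this by a different, cleaner choice of $\Pairs_1$: it takes the $2r$ midpoints $a_i:=M_{p_i,p_{i+1}}\in A$ (so $\{a_i,b_i\}\in\Pairs(A)\setminus\Pairs_0(A)$ automatically, where $b_i$ is the unique $\Pairs(A)$-partner of $a_i$), and extracts the needed asymmetry directly from the parity: writing $m_0:=|A\cap C_{p_1,a_1}|$ and $m_1:=|A\cap C_{a_1,p_2}|$, one has $m_0+m_1+1=m$ with $m$ even, so $m_0\neq m_1$. This single inequality, propagated by the $R_{p_j}$-symmetries, shows that the arc $C_{a_{i+1},p_{i+2}}$ (which lies inside $H_{a_i}\cap C_{a_i,b_i}$) meets $A$ in more points than it meets $R_{a_i}(A)$, so $A\cap H_{a_i}\cap C_{a_i,b_i}\not\subseteq R_{a_i}(A)$ and Lemma~\ref{lem:ww} gives cheapness with no case analysis. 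Your introduction of the maximal rotation order $s$ and the size-$2s$ orbit is aiming at the same asymmetry but in a much less direct way; the quantity $m_0-m_1$ is the right invariant to track.
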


\begin{proof}
Denote  $r:=|\Pairs_0(A)|$. 
By Observation \ref{obs:evenly}, 
$$\Pairs_0(A)=\left\{\{p_1,p_{r+1}\},\{p_2,p_{r+2}\},\ldots, \{p_r,p_{2r}\}\right\},$$
where the points $p_1,p_2,\ldots,p_{2r}$ are ordered counterclockwise and the set $\{p_1,p_2,\ldots,p_{2r}\}$ is evenly spread around the circle.  

For the rest of the proof we interpret the  indices of points in $S^1$ cyclically (e.g.,  $p_{2r+1}$ is interpreted as $p_1$).  
Let $m:=|A\cap C_{p_1,p_2}|$. Since the set $A$ is invariant under the reflection $R_{p_i}$ for every $1\leq i\leq 2r$, it follows that for every $1\leq i\leq 2r$, 
$$|A\cap C_{p_i,p_{i+1}}|=m.$$
Hence, $n=|A|=2r(m+1)$. Since $n\bmod 4=2$, it follows that $m$ is even and $r$ is odd, i.e., $r=2s+1$ for some integer $s$ which is necessarily positive since $r>1$.

For every $1\leq i\leq 2r$, since $\{p_{i-s},p_{i+1+s}\}\in \Pairs_0(A)$, the point $a_{i}:=M_{p_i,p_{i+1}}=M_{p_{i-s},p_{i+1+s}}$ is in $A$.
Let $m_0:=|A\cap C_{p_1,a_1}|$ and $m_1:=|A\cap C_{a_1,p_2}|$.
Since the set $A$ is invariant under the reflection $R_{p_i}$ for every $1\leq i\leq 2r$, it follows that for every $1\leq i\leq 2r$,
$$|A\cap C_{p_i,a_i}|=m_{(i+1)\bmod 2},\quad |A\cap C_{a_i,p_{i+1}}|=m_{i\bmod 2}.$$

For every $1\leq i\leq 2r$, let $b_i$ be the point in $A$ for which $\{a_i,b_i\}\in \Pairs(A)$ (see an example, in which $r=3, m_0=0, m_1=1$, in Figure \ref{fig2}).

\begin{figure}
\centering
\begin{tikzpicture}
\draw[color=magenta] (0,0) circle [radius=4]; 
\filldraw[blue] (0,4) circle (2pt) {};
\draw[fill=none] (0,3.6) node {$p_2$};
\filldraw[blue] (0,-4) circle (2pt) {};
\draw[fill=none] (0,-3.6) node {$p_5$};
\filldraw[blue] (4, 0) circle (2pt) {};
\draw[fill=none] (3.6,0) node {$a_6$};
\filldraw[blue] (-4, 0) circle (2pt) {};
\draw[fill=none] (-3.6,0) node {$a_3$};
\filldraw[blue] (2,3.5) circle (2pt) {};
\draw[fill=none] (1.8,3.15) node {$a_1$};
\filldraw[blue] (3.5,2) circle (2pt) {};
\draw[fill=none] (3.15,1.8) node {$p_1$};
\filldraw[blue] (2,-3.5) circle (2pt) {};
\draw[fill=none] (1.8,-3.15) node {$a_5$};
\filldraw[blue] (3.5,-2) circle (2pt) {};
\draw[fill=none] (3.15,-1.8) node {$p_6$};
\filldraw[blue] (-2,3.5) circle (2pt) {};
\draw[fill=none] (-1.8,3.15) node {$a_2$};
\filldraw[blue] (-3.5,2) circle (2pt) {};
\draw[fill=none] (-3.15,1.8) node {$p_3$};
\filldraw[blue] (-2,-3.5) circle (2pt) {};
\draw[fill=none] (-1.8,-3.15) node {$a_4$};
\filldraw[blue] (-3.5,-2) circle (2pt) {};
\draw[fill=none] (-3.15,-1.8) node {$p_4$};
\filldraw[blue] (1.5,3.7) circle (2pt) {};
\draw[fill=none] (1.35,3.4) node {$b_4$};
\filldraw[blue] (-1.5,3.7) circle (2pt) {};
\draw[fill=none] (-1.35,3.4) node {$b_5$};
\filldraw[blue] (2.45,-3.15) circle (2pt) {};
\draw[fill=none] (2.2,-2.85) node {$b_2$};
\filldraw[blue] (3.95,-0.55) circle (2pt) {};
\draw[fill=none] (3.55,-0.5) node {$b_3$};
\filldraw[blue] (-2.45,-3.15) circle (2pt) {};
\draw[fill=none] (-2.2,-2.85) node {$b_1$};
\filldraw[blue] (-3.95,-0.55) circle (2pt) {};
\draw[fill=none] (-3.55,-0.5) node {$b_6$};
\end{tikzpicture} 
\caption{}\label{fig2}
\end{figure}

For every $1\leq i\leq 2r$, clearly $a_i\notin\{p_1,p_2,,\ldots,p_{2r}\}$ and hence $\{a_i,b_i\}\notin \Pairs_0(A)$. 
Therefore, 
$$\Pairs_1:=\left\{\{a_1,b_1\},\{a_2,b_2\},\ldots,\{a_{2r},b_{2r}\}\right\}\subseteq \Pairs(A)\setminus \Pairs_0(A)$$
and clearly $|\Pairs_1|\geq r=|\Pairs_0(A)|$.
(We remark that it is easy to show that for every $1\leq i\leq 2r$, the point $b_i$ is in the open arc $C_{p_{i+r},p_{i+1+r}}$ and $b_i\neq a_{i+r}$, implying that the $2r$ pairs $\{a_1,b_1\},\{a_2,b_2\},\ldots,\{a_{2r},b_{2r}\}$ are distinct and hence $|\Pairs_1|\geq 2r$.)

To conclude the proof, we will show that for every $1\leq i\leq 2r$, 
$$\textstyle\frac{1}{2}w_A(a_i)+\frac{1}{2}w_A(b_i)\leq \frac{n}{2}-1.$$

Since $m=m_0+m_1+1$ is even, necessarily $m_0\neq m_1$. With no loss of generality, assume that $m_0<m_1$.

Assume first that $1\leq i\leq 2r$ is even.
Then,
$$
|A\cap C_{a_i,a_{i+r}}|=r+(r-1)m+2m_0<r+(r-1)m+m_0+m_1=r(m+1)-1=\textstyle\frac{1}{2}(n-2)=|A\cap C_{a_i,b_i}|.
$$
Hence, the point $a_{i+r}$ is in the arc $C_{a_i,b_i}$.
Therefore, since the points $a_i,a_{i+r}$ are antipodal, the arc $C_{a_i,b_i}$ is necessarily longer than the arc $C_{b_i,a_i}$.
Additionally, since $C_{a_{i+1},p_{i+2}}=R_{a_i}(C_{p_{i-1},a_{i-1}})$,
$$|R_{a_i}(A)\cap C_{a_{i+1},p_{i+2}}|=|A\cap C_{p_{i-1},a_{i-1}}|=m_0<m_1=|A\cap C_{a_{i+1},p_{i+2}}|$$
and hence $A\cap C_{a_{i+1},p_{i+2}}\not\subseteq R_{a_i}(A)$.
Surely, $C_{a_{i+1},p_{i+2}}\subset C_{a_i,a_{i+r}}\subset C_{a_i,b_i}$, since $r>1$ and the point $a_{i+r}$ is in the arc $C_{a_i,b_i}$. 
Moreover, $C_{a_{i+1},p_{i+2}}\subset H_{a_i}$, since $r\geq 3$.
Hence, $C_{a_{i+1},p_{i+2}}\subset H_{a_i}\cap C_{a_i,b_i}$ and therefore, $A\cap H_{a_i}\cap C_{a_i,b_i}\not\subseteq R_{a_i}(A)$. 
Hence, by the last part of Lemma \ref{lem:ww},
\begin{equation*}
\textstyle\frac{1}{2}w_A(a_i)+\frac{1}{2}w_A(b_i)\leq \frac{n}{2}-1.
\end{equation*}
Similarly, if $1\leq i\leq 2r$ is odd, then the arc $C_{b_i,a_i}$ is longer than the arc $C_{a_i,b_i}$ and $A\cap H_{a_i}\cap C_{b_i,a_i}\not\subseteq R_{a_i}(A)$ (since $A\cap C_{p_{i-1},a_{i-1}}\not\subseteq R_{a_i}(A)$) and hence, 
\begin{equation*}
\textstyle\frac{1}{2}w_A(a_i)+\frac{1}{2}w_A(b_i)=\textstyle\frac{1}{2}w_A(b_i)+\frac{1}{2}w_A(a_i)\leq \frac{n}{2}-1.
\qedhere\end{equation*}
\end{proof}

By combining \eqref{eq:sum_w}, Lemma \ref{lem:ww} and Lemma \ref{lem:structure}, we may now prove the following proposition that will be used to prove the upper bound in Theorem \ref{thm:S1}.
\begin{proposition}\label{prop:crudeS1}
Let $A$ be a set of  $n$ points in $S^1$. Then,
\begin{equation}\label{eq:generalupperS1}
|\AT {S^1} A|\leq n\left\lfloor n/2\right\rfloor+n.
\end{equation}
Moreover, if $n\bmod 4=2$, then
\begin{equation}\label{eq:rotation}
|\AT {S^1} A|\leq\textstyle\frac{1}{2}n^2+2.
\end{equation}
\end{proposition}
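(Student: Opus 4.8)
\textbf{Proof proposal for Proposition \ref{prop:crudeS1}.}

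The plan is to bound $|\AT {S^1} A|$ by summing the quantities $w_A(b)$ over $b\in A$ via \eqref{eq:sum_w}, but to do so in pairs, exploiting Lemma \ref{lem:ww}. First I would observe that $w_A$ is determined up to a pairing: since $|A|=n$, I want to partition $A$ (or nearly partition it) into pairs $\{a,b\}$ each lying in $\Pairs(A)$, so that $\frac12 w_A(a)+\frac12 w_A(b)$ is controlled. The natural way to produce such pairs is to order the points of $A$ counterclockwise as $q_1,\dots,q_n$ and pair $q_i$ with $q_{i+\lfloor n/2\rfloor}$ (indices mod $n$); by construction each such pair $\{q_i,q_{i+\lfloor n/2\rfloor}\}$ lies in $\Pairs(A)$, because the two arcs between them contain either $\lfloor n/2\rfloor-1$ and $\lceil n/2\rceil-1$ points, or (if $n$ is even) both contain $\frac n2-1$ points. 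When $n$ is odd this gives a collection of $n$ pairs covering each point exactly twice, so summing \eqref{eq:ww_concise} over all $n$ pairs and dividing by $2$ yields $|\AT {S^1} A|=\sum_b w_A(b)\leq n(\lfloor n/2\rfloor+1)$, which is \eqref{eq:generalupperS1}. When $n$ is even the pairing $q_i\mapsto q_{i+n/2}$ is $2$-to-$1$ onto a set of $n/2$ pairs, each covering a point of $A$ once; summing \eqref{eq:ww_concise} over these $n/2$ pairs gives $|\AT {S^1} A|\leq \frac n2(\frac n2+1)=n\lfloor n/2\rfloor+n$ as well. (A small bookkeeping point I should check is that the set of antipodal pairs among the $q_i$ is handled consistently; this is routine.)

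For the refined bound \eqref{eq:rotation} when $n\bmod 4=2$, I would again use the $n/2$ pairs from the even case, so that $|\AT {S^1} A|=\sum_{\text{pairs}}\big(\frac12 w_A(a)+\frac12 w_A(b)\big)$ where the sum is over $n/2$ pairs. Generically Lemma \ref{lem:ww} gives each summand $\leq \frac n2$, so naively $|\AT {S^1} A|\leq \frac n2\cdot\frac n2=\frac14 n^2$ — too good, so something must be off; the subtlety is that equality $\frac12 w_A(a)+\frac12 w_A(b)=\frac n2+1$ is possible, pushing the sum up to $\frac14 n^2+\frac n2$. By the "moreover" clause of Lemma \ref{lem:ww}, every pair achieving the value $\frac n2+1$ lies in $\Pairs_0(A)$. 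So the strategy is: let $k$ be the number of pairs in our collection attaining $\frac n2+1$; each such pair is in $\Pairs_0(A)$, hence $|\Pairs_0(A)|\geq k$ (in fact the pairs of $\Pairs_0(A)$ are antipodal, and one must match them against the $q_i$-pairing, using that a pair in $\Pairs_0(A)$ forces $M_{a,b},M_{b,a}\in A$ and $R_a$-invariance of $A$). If $k\leq 1$ then $|\AT{S^1}A|\leq (\frac n2-1)\cdot\frac n2 + 1\cdot(\frac n2+1)=\frac14 n^2+2\leq\frac12 n^2+2$ — wait, I should be careful: I want $\frac12 n^2+2$, and the bound I get from counting $w_A$ is at most $n\lfloor n/2\rfloor+n=\frac12 n^2-n+n=\frac12 n^2$ when... let me recompute: for $n$ even, $n\lfloor n/2\rfloor=n\cdot\frac n2=\frac12 n^2$, so \eqref{eq:generalupperS1} already gives $|\AT{S^1}A|\leq\frac12 n^2+... $ no, $n\lfloor n/2\rfloor+n = \frac12 n^2 + n$. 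So the task in \eqref{eq:rotation} is genuinely to shave off $n-2$ from the general bound when $n\equiv 2\pmod 4$.

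Thus the real argument for \eqref{eq:rotation} runs as follows. Write $r:=|\Pairs_0(A)|$. The $n/2$ pairs of the $q_i$-pairing split into those that lie in $\Pairs_0(A)$ — there are at most $r$ of these, each contributing at most $\frac n2+1$ — and the rest, each contributing at most $\frac n2$ by \eqref{eq:ww_concise}; but in fact I need a saving. If $r\leq 1$: there is at most one pair with value $\frac n2+1$, and I claim (using \eqref{eq:ww_concise} more carefully, distinguishing antipodal from non-antipodal via \eqref{eq:antipodal1} versus \eqref{eq:notantipodal1}) that a non-antipodal pair contributes at most $\lceil n/2\rceil=\frac n2$, while the bound $\frac n2+1$ needs antipodality; there are only $n/2$ antipodal pairs available total but among the $q_i$-pairing the number of antipodal pairs is limited — this needs the sharper inequality. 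Honestly the cleaner route when $r\leq 1$: sum $\frac12 w_A(a)+\frac12 w_A(b)\leq \frac n2+1$ over the $n/2$ pairs but note at most $r\leq 1$ pair attains equality and the others attain at most $\frac n2$, giving $\le (\frac n2-1)\frac n2+(\frac n2+1)=\frac{n^2}{4}-\frac n2+\frac n2+1=\frac{n^2}{4}+1$, so $|\AT{S^1}A|\le \frac{n^2}{4}+1\le\frac12 n^2+2$ comfortably. Hmm, that contradicts \eqref{eq:generalupperS1} being $\frac12 n^2+n$; the resolution is that the $q_i$-pairing double-counts correctly so the sum over $n/2$ pairs IS $|\AT{S^1}A|$, and $\frac n2$ per pair already gives only $\frac14 n^2$, not $\frac12 n^2$ — so I've mis-stated the pairing multiplicity. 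The correct statement: with $n$ even, pairing $q_i\leftrightarrow q_{i+n/2}$ for $i=1,\dots,n$ gives each point in exactly $2$ pairs but each unordered pair appears twice, i.e. $n/2$ distinct pairs each counting its two points once, so $\sum_{\text{distinct pairs}}(w_A(a)+w_A(b)) = \sum_b w_A(b)=|\AT{S^1}A|$, and each distinct-pair summand is $w_A(a)+w_A(b)=2(\frac12 w_A(a)+\frac12 w_A(b))\le 2(\frac n2+1)=n+2$; over $n/2$ pairs this is $\le\frac n2(n+2)=\frac12 n^2+n$, matching \eqref{eq:generalupperS1}. Good — so for \eqref{eq:rotation} I want: if $r=|\Pairs_0(A)|\le 1$, at most one distinct pair contributes $n+2$ and the rest contribute $\le n$... but that gives $\le (\frac n2-1)n+(n+2)=\frac12 n^2+2$, exactly \eqref{eq:rotation}. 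And if $r>1$, then $n\bmod 4=2$ lets me invoke Lemma \ref{lem:structure}: there is $\Pairs_1\subseteq\Pairs(A)\setminus\Pairs_0(A)$ with $|\Pairs_1|\ge r$ on which $\frac12 w_A(a)+\frac12 w_A(b)\le \frac n2-1$. The point is that the $r$ "good" pairs in $\Pairs_0(A)$ that could each gain $+1$ are compensated by $r$ pairs in $\Pairs_1$ that each lose $\ge 1$; choosing a system of $n/2$ pairs that includes enough of $\Pairs_1$ (or averaging over the rotationally-symmetric family of pairings) one gets $|\AT{S^1}A|\le \frac12 n^2+2$ again. The main obstacle is precisely this last combinatorial reconciliation — arranging that the pairs where $w$ is large and the pairs where $w$ is small can be simultaneously realized in (or amortized over) a single valid pairing of $A$ — and making rigorous the claim that "at most $r$ of the $q_i$-pairs lie in $\Pairs_0(A)$"; I would handle it by summing \eqref{eq:sum_w} directly as $|\AT{S^1}A|=\sum_b w_A(b)$, grouping the $n$ terms into the $n/2$ antipodal-in-index pairs, applying \eqref{eq:ww_concise} to each, then subtracting a correction of $2r-2|\Pairs_1\cap(\text{our pairing})|\ge 0$ coming from Lemma \ref{lem:structure}, which forces the total down to $\frac12 n^2+2$.
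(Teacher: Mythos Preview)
Your approach is the same as the paper's: pair the points of $A$ into elements of $\Pairs(A)$, sum \eqref{eq:ww_concise} over the pairs, and for $n\equiv 2\pmod 4$ split into the cases $|\Pairs_0(A)|\le 1$ and $|\Pairs_0(A)|>1$, invoking Lemma~\ref{lem:structure} in the latter. Your case $r\le 1$ is handled correctly.

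The confusion in your $r>1$ case stems from not noticing a simple structural fact: when $n$ is even, your $q_i$-pairing is not merely \emph{contained in} $\Pairs(A)$, it \emph{is} $\Pairs(A)$. Indeed, for even $n$ every $a\in A$ lies in exactly one pair of $\Pairs(A)$ (the condition $\big||A\cap C_{a,b}|-|A\cap C_{b,a}|\big|\le 1$ forces $b$ to be the point diametrically opposite $a$ in the cyclic order), so $\Pairs(A)$ consists of precisely your $n/2$ pairs $\{q_i,q_{i+n/2}\}$. Since $\Pairs_0(A)\subseteq\Pairs(A)$ by definition and $\Pairs_1\subseteq\Pairs(A)\setminus\Pairs_0(A)$ by the statement of Lemma~\ref{lem:structure}, both $\Pairs_0(A)$ and $\Pairs_1$ are already subsets of your pairing, and they are disjoint. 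There is no ``combinatorial reconciliation'' to perform and no amortization needed. The bound is then immediate:
\[
|\AT{S^1}{A}|=\sum_{\{a,b\}\in\Pairs(A)}\bigl(w_A(a)+w_A(b)\bigr)\le 2r\Bigl(\tfrac n2+1\Bigr)+2|\Pairs_1|\Bigl(\tfrac n2-1\Bigr)+2\Bigl(\tfrac n2-r-|\Pairs_1|\Bigr)\tfrac n2=\tfrac{n^2}{2}-2\bigl(|\Pairs_1|-r\bigr)\le\tfrac{n^2}{2}.
\]
Your final displayed ``correction'' has the inequality backwards (you want $2r-2|\Pairs_1|\le 0$, not $\ge 0$), but once you see that $\Pairs_1$ sits entirely inside the pairing this fixes itself.
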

\begin{proof}
For simplicity, denote 
$$\nu:=\begin{cases}
1 & n \text{ is even},\\
2 & n \text{ is odd}.
\end{cases}$$
Clearly, for every $a\in A$, 
\begin{equation}\label{eq:aS}
|\{b\in A:\{a,b\}\in \Pairs(A)\}|=\nu,
\end{equation}
and hence 
\begin{equation}\label{eq:S}|\Pairs(A)|=\textstyle\frac{\nu}{2}n.\end{equation}
By \eqref{eq:sum_w}, in light of \eqref{eq:aS}, 
\begin{equation}\label{eq:sum}|\AT {S^1} A|=\textstyle\frac{2}{\nu}\sum_{\{a,b\}\in \Pairs(A)}\left(\textstyle\frac{1}{2}w_A(a)+\frac{1}{2}w_A(b)\right),\end{equation}
and \eqref{eq:generalupperS1} follows, by using \eqref{eq:S} and \eqref{eq:ww_concise}.

Suppose now that $n\bmod 4=2$. Then, $\nu=1$; hence, $|\Pairs(A)|=n/2$, by \eqref{eq:S}; moreover, 
$$
|\AT {S^1} A|=2\sum_{\{a,b\}\in \Pairs(A)}\left(\textstyle\frac{1}{2}w_A(a)+\frac{1}{2}w_A(b)\right),
$$
by \eqref{eq:sum}. Therefore, if $|\Pairs_0(A)|\leq1$, then by Lemma \ref{lem:ww},
$$
|\AT {S^1} A|\leq2 |\Pairs_0(A)|\left(\textstyle\frac{n}{2}+1\right)+2|\Pairs(A)\setminus \Pairs_0(A)|\textstyle\frac{n}{2}=n|\Pairs(A)|+2|\Pairs_0(A)|\leq \textstyle\frac{1}{2}n^2+2,
$$
and if $|\Pairs_0(A)|>1$ then, by Lemma \ref{lem:structure}, there is a set $\Pairs_1\subseteq \Pairs(A)\setminus \Pairs_0(A)$ such that $|\Pairs_1|\geq |\Pairs_0(A)|$ and 
$\textstyle\frac{1}{2}w_A(a)+\frac{1}{2}w_A(b)\leq \frac{n}{2}-1$ for every $\{a,b\}\in \Pairs_1$
and hence, by Lemma \ref{lem:ww},
\begin{align*}|\AT {S^1} A|&\leq 2|\Pairs_0(A)|\left(\textstyle\frac{n}{2}+1\right)+2|\Pairs_1|\left(\textstyle\frac{n}{2}-1\right)+2|\Pairs(A)\setminus(\Pairs_0(A)\cup \Pairs_1)|\textstyle\frac{n}{2}\\
&=n|\Pairs(A)|-2\left(|\Pairs_1|-|\Pairs_0(A)|\right)\leq \textstyle\frac{1}{2}n^2.
\qedhere\end{align*}
\end{proof}

We are finally ready to prove Theorem \ref{thm:S1}.
\begin{proof}[Proof of Theorem \ref{thm:S1}]
Recall that we need to show that for every $n\neq 2$,
$$\mu_n(S^1)=\begin{cases}
 \frac{1}{2}n^2+n & n\bmod 4=0,\\
\frac{1}{2}n^2+\frac{1}{2}n& n\bmod 4=1,\\
\frac{1}{2}n^2+2 & n\bmod 4=2,\\
\frac{1}{2}n^2+\frac{1}{2}n-1 & n\bmod 4=3.
\end{cases}$$

Suppose first that $n\bmod 4=0$. 
The lower bound follows since $|\AT {S^1} A|=\frac{1}{2}n^2+n$ for any $A\in {\mathcal F}_n$, by \eqref{eq:ATMn}.
The upper bound follows immediately from \eqref{eq:generalupperS1}.

Suppose now that $n\bmod 4=1$. 
The lower bound follows since $|\AT {S^1} A|=\frac{1}{2}n^2+\frac{1}{2}n$ for any $A\in {\mathcal F}_n\cup{\mathcal F}^{[+1]}_{n-1}$, by \eqref{eq:ATMn} and \eqref{eq:ATMn+1}.
The upper bound follows immediately from \eqref{eq:generalupperS1}.

Next, suppose that $n\bmod 4=2$. 
The lower bound follows since $|\AT {S^1} A|=\frac{1}{2}n^2+2$ for any $A\in {\mathcal F}^{[-2]}_{n+2}\cup{\mathcal F}^{[+2]}_{n-2}$, by \eqref{eq:ATMn-2} and \eqref{eq:ATMn+2}.
The upper bound follows immediately from \eqref{eq:rotation}.

Finally, suppose that $n\bmod 4=3$. 
The lower bound follows since $|\AT {S^1} A|=\frac{1}{2}n^2+\frac{1}{2}n-1$ for any $A\in {\mathcal F}^{[-1]}_{n+1}$, by \eqref{eq:ATMn-1}.
To prove the upper bound, let $A$ be a set of $n$ points in $S^1$. 
By \eqref{eq:generalupperS1}, it holds that $|\AT {S^1} A|\leq n\frac{n-1}{2}+n=\frac{1}{2}n^2+\frac{1}{2}n$,
and the result follows since $|\AT {S^1} A|-n$ is even, by Observation \ref{obs:parity}, whereas $n\frac{n-1}{2}$ is odd.
\end{proof}

\begin{rem}
Let $n$ be a positive integer which is divisible by $4$. It was mentioned in the proof of Theorem \ref{thm:S1} that $|\AT {S^1} A|=\mu_n(S^1)$ for every $A\in {\mathcal F}_n$. In fact,
$$\{A\subset S^1: |A|=n,\,|\AT {S^1} A|=\mu_n(S^1)\}={\mathcal F}_n.$$
Indeed, let $A$ be a set of $n$ points in $S^1$ such that $|\AT {S^1} A|=\mu_n(S^1)$, i.e., $|\AT {S^1} A|=\frac{1}{2}n^2+n$. 
Then, the proof of Proposition \ref{prop:crudeS1} reveals that for every $\{a,b\}\in \Pairs(A)$, it holds that $\frac{1}{2}w_A(a)+\frac{1}{2}w_A(b)=\frac{n}{2}+1$ and hence $\{a,b\}\in \Pairs_0(A)$, by Lemma \ref{lem:ww}. 
Therefore, $\Pairs(A)=\Pairs_0(A)$ and it follows, by Observation \ref{obs:evenly}, that $A$ is evenly spread around the circle.
\end{rem}

We were not able to determine, for general $n$, the maximal number of $3$-term arithmetic progressions in $n$-element subsets of the $2$-dimensional sphere $S^2=\{u\in{\mathbb R}^3: |u|=1\}$. 
We believe that the maximum is attained for (appropriate) sets that are contained in the union of a \emph{great circle} of the sphere and the pair of respective `poles'.
We proceed to find the maximal number of $3$-term arithmetic progressions in sets of this form. 
Let 
$${\mathcal P}:=\left\{(1,0,0),(-1,0,0)\right\}$$
be the pair of `north and south pole' of the sphere $S^2$, let
$${\mathcal E}_0:=\left\{(x,y,z)\in S^2\mid z=0\right\}=\left\{(x,y,0)\mid (x,y)\in S^1\right\}$$
be the corresponding `equator' and consider the set
$${\mathcal E}:=\left\{(x,y,z)\in S^2\mid z\in\{-1,0,1\}\right\}={\mathcal E}_0\cup{\mathcal P}$$
(with respect to the metric of the sphere $S^2$).

\begin{proposition}\label{prop:equator}
For every $n\geq 2$,
\begin{equation}\label{eq:S2}
\mu_n({\mathcal E})=\frac{1}{2}n^2+
\begin{cases}
2n-4 & n\bmod 4=0,\\
\frac{5}{2}n-8& n\bmod 4=1,\\
3n-6 & n\bmod 4=2,\\
\frac{5}{2}n-7 & n\bmod 4=3.
\end{cases}
\end{equation}
\end{proposition}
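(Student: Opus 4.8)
The plan is to analyze the structure of $3$-term arithmetic progressions inside $\mathcal{E} = \mathcal{E}_0 \cup \mathcal{P}$ by first understanding all possible distances, then bookkeeping the contributions of progressions according to which of their entries land on the poles. Recall that distances in $S^2$ along $\mathcal{E}_0$ agree with $S^1$-distances (up to the scaling by the equator's radius, which we may normalize so that $\mathcal{E}_0$ is an isometric copy of $S^1$), each pole is at distance exactly $\frac{\pi}{2}$ from every point of $\mathcal{E}_0$, and the two poles are at distance $\pi$ from each other. Consequently, for a set $A = B \cup P'$ with $B \subseteq \mathcal{E}_0$ and $P' \subseteq \mathcal{P}$, I would partition $\AT{\mathcal{E}}{A}$ into: (i) progressions entirely inside $B$, which are governed by the $S^1$ analysis (Claim \ref{claim:ATMn}, Proposition \ref{prop:crudeS1}); (ii) progressions with exactly one pole entry; (iii) progressions with exactly two pole entries; (iv) progressions with all three entries in $\mathcal{P}$. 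Cases (iii)–(iv) are small and easily enumerated (e.g.\ $(p, m, p)$ where $p$ ranges over the poles and $m\in B$ is equidistant, i.e.\ $d(p,m)=d(m,p)=\frac\pi2$ and $d(p,p)=\pi$: these hold for \emph{every} $m\in B$, since every equatorial point is a midpoint of the two poles; and $(p,q,r)$ with all entries poles forces it to be constant since $d(p,q)\in\{0,\pi\}$ cannot be halved consistently otherwise). The bulk of the counting is case (ii).

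For case (ii), a progression with one pole entry $p$ and two equatorial entries $x, z$ comes in two shapes: the pole is the middle term, giving $(x, p, z)$ with $d(x,p)=d(p,z)=\frac\pi2$ (automatic) and $d(x,z)=\pi$, i.e.\ $x$ and $z$ are antipodal on $\mathcal{E}_0$; or the pole is an endpoint, giving $(p, y, z)$ with $d(p,y)=d(y,z)=\frac{\pi}{2}\cdot\frac{1}{?}$— more precisely $d(p,y)=d(y,z)=\frac12 d(p,z)=\frac\pi4$, which forces $y\in\mathcal{E}_0$ at equatorial distance $\frac\pi4$ from $p$'s foot… but $p$'s foot is not in $\mathcal{E}_0$, so this requires care: $d(p,z)=\frac\pi2$ for any $z\in\mathcal{E}_0$, so we'd need $d(p,y)=d(y,z)=\frac\pi4$, impossible for $y\in\mathcal{E}_0$ (distance to a pole is always $\frac\pi2$) and impossible for $y$ a pole (distance $\frac\pi2$ or $\pi$ to $z$). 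Hence the only pole-endpoint progressions are degenerate or absent, and case (ii) reduces entirely to counting antipodal pairs $\{x,z\}$ in $B$ — each contributing the two ordered progressions $(x,p,z),(z,p,x)$ for each pole $p\in P'$ present. This is where the arithmetic enters: with $|P'|=2$ poles and $B$ of size $n-2$, the count is $4\cdot\#\{\text{antipodal pairs in }B\}$, plus the $2|P'|\cdot|B|$ progressions of shape $(p,m,p)$, plus constants. To maximize, one wants $B$ to be an evenly-spread configuration (or a near-miss) maximizing simultaneously its internal $S^1$-count and its number of antipodal pairs; this is exactly why the extremizers are $\mathcal{F}_n$-type sets shifted by the $\mathcal{F}^{[\pm 1]}, \mathcal{F}^{[\pm 2]}$ modifications, matching the four residue classes mod $4$.

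Concretely, I would set $k := |B| = n - 2$ when both poles are used (the generically optimal choice; one must also check $|P'| \in \{0, 1\}$ give strictly fewer, which is routine since dropping a pole loses $\approx 2n$), write
$$|\AT{\mathcal{E}}{A}| = |\AT{\mathcal{E}_0}{B}| + 4 a(B) + 4k + 2,$$
where $a(B)$ is the number of unordered antipodal pairs in $B$ and the $4k+2$ collects the $(p,m,p)$ progressions over both poles and all $m\in B$ together with the three constant progressions minus an overcount correction — the exact lower-order constant is where I expect to spend the most care. Then I maximize the right-hand side over $k$-subsets $B \subseteq S^1$: for $k \bmod 4$ in each residue class I plug in the evenly-spread (or once/twice perturbed) sets from Claim \ref{claim:ATMn}, which simultaneously make $a(B)$ as large as possible (namely $k/2$ antipodal pairs when $k$ is even and the configuration is centrally symmetric, fewer otherwise) and make $|\AT{\mathcal{E}_0}{B}|$ meet the Proposition \ref{prop:crudeS1} bound. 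Translating $k = n-2$ back and sorting by $n \bmod 4$ yields the four cases of \eqref{eq:S2}. For the matching upper bound I would argue that for \emph{any} $A$, splitting as above gives $|\AT{\mathcal{E}}{A}| \le \mu_{|B|}(S^1) + 4 a(B) + (\text{pole terms})$, bound $\mu_{|B|}(S^1)$ by Proposition \ref{prop:crudeS1}, bound $a(B) \le \lfloor |B|/2 \rfloor$, and handle the interaction — the subtlety being that a $B$ achieving $\mu_{|B|}(S^1)$ and a $B$ maximizing $a(B)$ might conceivably be incompatible, so one needs the stronger structural fact (as in the Remark after Theorem \ref{thm:S1}, and Lemmas \ref{lem:ww}, \ref{lem:structure}) that the $S^1$-extremizers \emph{are} evenly spread, hence automatically centrally symmetric when $|B|$ is even, hence also maximize $a(B)$.

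The main obstacle I anticipate is precisely this compatibility argument in the upper bound, together with pinning down the exact additive constants (the $-4, -8, -6, -7$ terms) which depend delicately on parity of $k = n-2$, on whether the optimal $B$ actually contains the midpoints $M_{a,b}, M_{b,a}$ needed for the $\mathcal{F}^{[\pm 2]}$ constructions, and on whether using one pole versus two is optimal in boundary cases of small $n$. The residue $n \bmod 4 = 2$ (so $k \bmod 4 = 0$) should be cleanest since then $B \in \mathcal{F}_k$ works directly; the odd residues, where $k$ is odd and $a(B) \le (k-1)/2$ with equality needing a diameter in $B$, will require the most careful casework, likely mirroring the $\mathcal{F}^{[-1]}$ and $\mathcal{F}^{[+1]}$ analysis already carried out for $S^1$.
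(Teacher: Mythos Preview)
Your overall strategy---split $A$ into equatorial part $B$ and polar part $P'$, classify progressions by how many entries are poles, reduce to the $S^1$ problem plus antipodal-pair counting---is exactly the paper's approach, and your identification of the crux (``a $B$ achieving $\mu_{|B|}(S^1)$ and a $B$ maximizing $a(B)$ might conceivably be incompatible'') is spot on. Two bookkeeping corrections first: the progressions you write as ``$(p,m,p)$'' are really $(p,m,-p)$ with the two \emph{distinct} poles as endpoints (since $d(p,-p)=\pi$ and $d(p,m)=d(m,-p)=\pi/2$ for every equatorial $m$); there are $2|B|$ of these when both poles are present, not $2|P'|\cdot|B|$, so the correct identity is
\[
|\AT{\mathcal{E}}{A}|=|\AT{S^1}{B}|+4\,a(B)+2k+2,\qquad k=|B|=n-2,
\]
matching the paper's \eqref{eq:PsubA}.

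The genuine gap is in your resolution of the compatibility question. You assert that the $S^1$-extremizers ``are evenly spread, hence automatically centrally symmetric when $|B|$ is even, hence also maximize $a(B)$.'' This fails precisely when $n\bmod 4=0$, i.e.\ $k\bmod 4=2$: there the $S^1$-extremizers lie in $\mathcal{F}^{[-2]}_{k+2}\cup\mathcal{F}^{[+2]}_{k-2}$, which are \emph{not} centrally symmetric, while the evenly spread set in $\mathcal{F}_k$ (which is centrally symmetric) gives only $k^2/2<\mu_k(S^1)=k^2/2+2$. The Remark following Proposition \ref{prop:equator} records exactly this incompatibility: for $n\bmod 4=0$, $n>4$, no $B$ attains both maxima. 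The paper closes the gap not via the structure of extremizers but by a parity argument: if $\nu(B)=B$ and $|B|\bmod 4=2$, then for any $\{a,b\}\in\Pairs_0(B)$ the invariance of $B$ under both $\nu$ and $R_a=R_b$ forces $|B\cap C_{a,M_{a,b}}|=|B\cap C_{M_{a,b},b}|$, making $|B\cap C_{a,b}|=(|B|-2)/2$ odd, a contradiction; hence $\Pairs_0(B)=\emptyset$, and Lemma \ref{lem:ww} (via \eqref{eq:sum_w}) gives $|\AT{S^1}{B}|\le k^2/2$ directly. This deficit of $2$ exactly offsets the gain from full antipodal pairing and produces the $2n-4$ term. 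Without this argument your upper bound for $n\bmod 4=0$ overshoots by $2$. Incidentally, this---not the odd residues---is the delicate case; the other three residue classes go through with the na\"{\i}ve bounds $|\AT{S^1}{B}|\le\mu_k(S^1)$ and $a(B)\le\lfloor k/2\rfloor$, as in \eqref{eq:S2bmodn0}.
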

In particular, \eqref{eq:S2} implies, in light of Theorem \ref{thm:S1}, that $\mu_n({\mathcal E})>\mu_n(S^1)$ for every $n\geq 5$, thus confirming \eqref{eq:S2S1}, since for every $n$, obviously
\begin{equation}\label{eq:S2E}
\mu_n(S^2)\geq\mu_n({\mathcal E}).
\end{equation}
As already mentioned, we believe that the lower bound \eqref{eq:S2E} for $\mu_n(S^2)$ is tight for every $n$, i.e., we make the following conjecture.
\begin{conj}\label{conj:S2}
For every $n\geq 2$
$$\mu_n(S^2)=\frac{1}{2}n^2+
\begin{cases}
2n-4 & n\bmod 4=0,\\
\frac{5}{2}n-8& n\bmod 4=1,\\
3n-6 & n\bmod 4=2,\\
\frac{5}{2}n-7 & n\bmod 4=3.
\end{cases}$$
\end{conj}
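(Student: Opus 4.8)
Since $\mu_n(S^2)\ge\mu_n(\mathcal E)$ together with Proposition \ref{prop:equator} already furnishes the conjectured value as a \emph{lower} bound, the whole content of the conjecture is the matching \emph{upper} bound, and that is what I would attack; the authors' inability to settle it already signals that the difficulty lies exactly here. The plan opens with a structural lemma: \emph{every nonconstant $3$-term arithmetic progression $(a,b,c)$ in $S^2$ lies on a unique great circle}. Indeed, the points equidistant from $a$ and $c$ form a great circle (the perpendicular bisector when $a,c$ are not antipodal, and the polar circle $S_{S^2}(a;\frac{\pi}{2})$ when they are). Imposing in addition $d(a,b)=\frac12 d(a,c)$ forces, in the non-antipodal case, the single geodesic midpoint of the short $a$--$c$ arc, so that $a,b,c$ lie on the unique great circle through $a$ and $c$; in the antipodal case any $b$ on the polar circle qualifies, but such a $b$ is non-antipodal to both $a$ and $c$, whence $a,b,c$ lie on the unique great circle through $a$ and $b$. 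In all cases $d(a,c)\le\pi$ forces the common distance to be at most $\frac{\pi}{2}$, reducing every nonconstant progression to a one-dimensional progression on a great circle.

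I would then set up a great-circle decomposition. Writing $A_\Gamma:=A\cap\Gamma$ for a great circle $\Gamma$, each $\Gamma$ is isometric to a scaled copy of $S^1$, and since the arithmetic-progression condition is scale invariant, $\AT{S^1}{A_\Gamma}$ is governed by Theorem \ref{thm:S1}. Assigning each nonconstant progression to its unique great circle gives the exact identity $|\AT{S^2}{A}|=n+\sum_{\Gamma}\bigl(|\AT{S^1}{A_\Gamma}|-|A_\Gamma|\bigr)$, the sum running over great circles carrying a progression (each then has $|A_\Gamma|\ge 3$). Bounding each summand separately by $\mu_{|A_\Gamma|}(S^1)-|A_\Gamma|$ is, however, far too lossy: a single point of $A$ can lie on linearly many such circles (most dramatically, an antipodal pair lies on the entire pencil of meridians through the two poles), so $\sum_\Gamma|A_\Gamma|$ is not controlled by $n$ and the naive sum overshoots $\frac12 n^2$ by a wide margin. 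Thus the decomposition is the correct bookkeeping device but cannot be applied circle-by-circle.

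The refinement I would pursue is a midpoint-centred double count. Using $|\AT{S^2}{A}|=\sum_{b\in A}w_A(b)$ with $w_A(b)=1+2p(b)$, where $p(b)$ is the number of unordered pairs $\{x,y\}\subseteq A$ lying in opposite geodesic directions from $b$ at a common distance $\le\frac{\pi}{2}$, I would encode each such interior point by its \emph{direction-and-distance} data seen from $b$ (a point of the tangent circle together with a radius in $(0,\frac{\pi}{2}]$), so that a progression through $b$ is exactly an antipodal direction pair at equal radius. The target is a global inequality $\sum_{b\in A}w_A(b)\le\frac12 n^2+O(n)$ in which the excess over $\frac12 n^2$ is charged to the two ``polar'' directions, forcing at equality all the mass onto one great circle together with a single antipodal pair of poles equidistant from it -- precisely the configuration $\mathcal E$. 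Feeding this back into the exact one-dimensional counts of Theorem \ref{thm:S1}, and invoking the parity constraint of Observation \ref{obs:parity} (which, as in the proof of Theorem \ref{thm:S1}, shaves off a unit in the odd residue classes), should then recover the four cases modulo $4$.

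The main obstacle is the passage from ``$\frac12 n^2+O(n)$'' to the \emph{exact} lower-order term, that is, a rigidity/stability statement. One needs a stability version of Theorem \ref{thm:S1} asserting that a near-extremal point set on a great circle is nearly evenly spread, together with a quantitative penalty showing that every point displaced off the dominant great circle destroys more progressions than the poles recover; the poles are extremal precisely because, sitting at distance $\frac{\pi}{2}$ from the whole equator, they simultaneously serve as midpoints of every antipodal equatorial pair. Converting this heuristic into a clean extremal argument that separates the four residues on the nose -- matching $2n-4,\ \frac52 n-8,\ 3n-6,\ \frac52 n-7$ exactly -- is the crux, and is very likely the reason the statement remains a conjecture rather than a theorem.
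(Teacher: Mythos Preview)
The statement is a \emph{conjecture}, and the paper does not prove it; the only upper bound the paper records is the trivial $\mu_n(S^2)\le n^2$, noted just after the conjecture, coming from the fact that each ordered pair $(a,b)$ extends to at most one progression. Your proposal correctly isolates the lower bound as already established by $\mu_n(S^2)\ge\mu_n(\mathcal E)$ together with Proposition~\ref{prop:equator}, exactly as the paper argues, and you are right that the entire content of the conjecture is the missing upper bound.

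What you have written is a research plan rather than a proof, and you say as much in your final paragraph. The structural lemma (every nonconstant progression lies on a unique great circle) and the resulting decomposition identity
\[
|\AT{S^2}{A}|=n+\sum_{\Gamma}\bigl(|\AT{S^1}{A_\Gamma}|-|A_\Gamma|\bigr)
\]
are both correct and are natural first steps. Your diagnosis that the circle-by-circle bound is too lossy is also correct: a single antipodal pair in $A$ lies on every great circle through it, so $\sum_\Gamma|A_\Gamma|$ is not controlled by $n$. The midpoint-centred approach via $w_A(b)$ is again reasonable. But the crucial step---turning the heuristic that off-equator points ``destroy more progressions than the poles recover'' into a sharp inequality that pins down the exact lower-order term in all four residue classes---is left entirely open, with only a vague appeal to an unproved stability version of Theorem~\ref{thm:S1}. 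You have identified the obstacle without surmounting it, which is precisely the state in which the paper leaves the problem. There is thus no gap to name beyond the one you yourself flag: the proposal is an outline of where the difficulty lies, not a proof, and so cannot be compared to a proof the paper does not contain.
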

Note that for every $n$, the upper bound $\mu_n(S^2)\leq n^2$ trivially holds, since for any two points $a,b\in S^2$ there is a unique point $c\in S^2$ such that $(a,b,c)$ is a $3$-term arithmetic progression.

\begin{proof}[Proof of Proposition \ref{prop:equator}]
Clearly $\mu_2(S^2)=2$, hence we assume that $n\geq 3$.
For simplicity, denote by $\mu^*_n$ the right hand side of \eqref{eq:S2}.
By Theorem \ref{thm:S1} (and since $\mu_2(S^1)=2$), 
\begin{subequations} 
\begin{align}
\mu^*_n&\geq\mu_n(S^1),\label{eq:star}\\
\mu^*_n&\geq\mu_{n-1}(S^1)+n,\label{eq:star-1}\\
\mu^*_n&=\mu_{n-2}(S^1)+\begin{cases}
4n-8 & n\bmod 4\neq 2 \text{ and } n\neq 4,\\
4n-6 & n\bmod 4=2 \text{ or } n=4.
\end{cases}\label{eq:star-2}
\end{align}
\end{subequations}
For any $(x,y)\in S^1$, let $\nu(x,y):=(-x,-y)$ and let $\iota(x,y):=(x,y,0)\in{\mathcal E}_0$.
Let $A$ be a set of $n$ points in $\mathcal E$. For simplicity, denote $A_0:=\{u\in S^1\mid \iota(u)\in A\}$, $A_{\pm}:=A\cap\mathcal P$.
Note that
\begin{align*}
\AT {\mathcal E} A&\subset {\mathcal E}_0^3\cup\left({\mathcal E}_0\times {\mathcal P}\times {\mathcal E}_0\right)\cup\left({\mathcal P}\times {\mathcal E}_0\times {\mathcal P}\right)\cup\{(u,u,u)\mid u\in{\mathcal P}\},\\
\AT {\mathcal E} A\cap\mathcal E_0^3&=\AT {{\mathcal E}_0} {\iota(A_0)}=\{(\iota(a),\iota(b),\iota(c))\mid (a,b,c)\in\AT {S^1} {A_0}\},\\
\AT {\mathcal E} A\cap\left({\mathcal E}_0\times {\mathcal P}\times {\mathcal E}_0\right)&=\{(\iota(a),u,\iota(\nu(a)))\mid a\in A_0\cap\nu\left(A_0\right),u\in A_{\pm}\},
\end{align*}
and
$$\AT {\mathcal E} A\cap\left({\mathcal P}\times {\mathcal E}_0\times {\mathcal P}\right)=\begin{cases}
\emptyset &  {\mathcal P}\not\subset A,\\
\{(u,\iota(a),-u)\mid u\in{\mathcal P},\, a\in A_0\} & {\mathcal P}\subset A.
\end{cases}$$
Therefore,
\begin{equation}\label{eq:PsubA}
|\AT {\mathcal E} A|=
\begin{cases}
|\AT {S^1} {A_0}|+|A_{\pm}|\cdot|A_0\cap\nu\left(A_0\right)|+|A_{\pm}| &  {\mathcal P}\not\subset A,\\
|\AT {S^1} {A_0}|+2|A_0\cap\nu\left(A_0\right)|+2|A_0|+2 & {\mathcal P}\subset A.
\end{cases}
\end{equation}
Now, if $A_{\pm}=\emptyset$, then $|\AT {\mathcal E} A|=|\AT {S^1} {A_0}|\leq\mu_n(S^1)\leq\mu^*_n$, by \eqref{eq:star}; 
if $|A_{\pm}|=1$, then $|\AT {\mathcal E} A|\leq \mu_{n-1}(S^1)+2\lfloor(n-1)/2\rfloor+1\leq\mu^*_n$,
by \eqref{eq:PsubA} and \eqref{eq:star-1};
if ${\mathcal P}\subset A$ and $n$ is not divisible by $4$ then, by \eqref{eq:PsubA} and \eqref{eq:star-2},
\begin{equation}\label{eq:S2bmodn0}
|\AT {\mathcal E} A|\leq \mu_{n-2}(S^1)+2\cdot 2\lfloor(n-2)/2\rfloor+2(n-2)+2=\mu^*_n;
\end{equation}
and if ${\mathcal P}\subset A$, $n$ is divisible by $4$ and $\nu(A_0)\neq A_0$ then, again by \eqref{eq:PsubA} and \eqref{eq:star-2},
\begin{equation}\label{eq:S2bmod0nantipod}
|\AT {\mathcal E} A|\leq \mu_{n-2}(S^1)+2(n-4)+2(n-2)+2\leq \mu^*_n-2.
\end{equation}
Finally, suppose that ${\mathcal P}\subset A$, $n$ is divisible by $4$ and $\nu(A_0)=A_0$. 
If $\{a,b\}\in \Pairs_0(A_0)$, then $|A_0\cap C_{a,M_{a,b}}|=|A_0\cap C_{M_{a,b},b}|$, since the set $A_0$ is invariant under both the antipodal map $\nu$ and the reflection $R_a=R_b$, and hence, since $M_{a,b}\in A_0$, it follows that 
$$\textstyle\frac{(n-2)-2}{2}=|A_0\cap C_{a,b}|=|A_0\cap C_{a,M_{a,b}}|+1+|A_0\cap C_{M_{a,b},b}|$$
is odd, contradicting the assumption that $n$ is divisible by $4$.  
Therefore, $\Pairs_0(A_0)$ is necessarily empty and hence, 
\begin{equation}\label{eq:S1bmod2antipod}
|\AT {S^1} {A_0}|\leq\textstyle\frac{1}{2}(n-2)^2, 
\end{equation}
by \eqref{eq:sum_w} and Lemma \ref{lem:ww}. 
Therefore, by \eqref{eq:PsubA},
\begin{equation}\label{eq:S2bmod0}
|\AT {\mathcal E} A|\leq \textstyle\frac{1}{2}(n-2)^2+2(n-2)+2(n-2)+2=\mu^*_n.
\end{equation}

To conclude the proof, note that by using Claim \ref{claim:ATMn}, equality occurs in \eqref{eq:S2bmodn0} whenever 
\begin{equation*}A_0\in\begin{cases}
{\mathcal F}^{[-1]}_{n-1} & n\bmod 4=1,\\
{\mathcal F}_{n-2} & n\bmod 4=2 \text{ or } n=3, \\
{\mathcal F}^{[+1]}_{n-3} & n\bmod 4=3 \text{ and } n\neq 3,
\end{cases}\end{equation*}
and for every $n$ divisible by $4$, equality occurs in \eqref{eq:S2bmod0} whenever $A_0\in{\mathcal F}_{n-2}$.
\end{proof}

\begin{rem}
It is not true in general that if $A_0$ is a set of $n-2$ points in $S^1$ for which $|\AT {S^1} {A_0}|=\mu_{n-2}(S^1)$, then $|\AT {\mathcal E} {\iota(A_0)\cup{\mathcal P}}|=\mu_{n}({\mathcal E})$ (where $\iota$ is as in the proof of Proposition \ref{prop:equator}).
Indeed, if $n\bmod 4=3$, then for any $A_0\in{\mathcal F}_{n-2}$, it was mentioned in the proof of Theorem \ref{thm:S1} that $|\AT {S^1} {A_0}|=\mu_{n-2}(S^1)$, but the set $A_0\cap\nu\left(A_0\right)$ is empty and hence $|\AT {\mathcal E} {\iota(A_0)\cup{\mathcal P}}|=\mu_{n}({\mathcal E})-2(n-3)$, by \eqref{eq:PsubA} and Proposition \ref{prop:equator}.

Moreover,  if $n>4$ is divisible by $4$, then there is not a single set $A_0$ of $n-2$ points in $S^1$ for which both $|\AT {S^1} {A_0}|=\mu_{n-2}(S^1)$ and $|\AT {\mathcal E} {\iota(A_0)\cup{\mathcal P}}|=\mu_{n}({\mathcal E})$.
Indeed, if $\nu(A_0)= A_0$ then $|\AT {S^1} {A_0}|\leq\mu_{n-2}(S^1)-2$, by \eqref{eq:S1bmod2antipod} and \eqref{eq:ATMn-2} (or \eqref{eq:ATMn+2}), 
and if $\nu(A_0)\neq A_0$ then $|\AT {\mathcal E} {\iota(A_0)\cup{\mathcal P}}|\leq\mu_{n}({\mathcal E})-2$, by \eqref{eq:S2bmod0nantipod} and the last line of the proof of Proposition \ref{prop:equator}.
\end{rem}

\section{Additional bounds}\label{sec:misc}
\begin{proposition}\label{prop:trees}
Let $\mathbb T_r$ be the (discrete) $r$-regular tree, equipped with the graph metric. For every ball $A$ in $\mathbb T_r$ it holds that
\begin{equation}\label{eq:tree_exact}
|\AT {\mathbb T_r} A|=\left(\frac{1}{2}+\frac{(r-2)^2}{2r^2}\right)|A|^2+\frac{2(r-2)}{r^2}|A|+\frac{2}{r^2}.
\end{equation}
Consequently,
$$\limsup_{n\to\infty}\frac{\mu_n({\mathbb T}_r)}{n^2}\geq\frac{1}{2}+\frac{(r-2)^2}{2r^2}.$$
\end{proposition}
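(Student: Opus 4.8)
The plan is to count $3$-term arithmetic progressions in a ball $A=B_{\mathbb T_r}(v;R)$ directly, organizing the count by the middle point $b$ and exploiting the self-similar structure of the tree. Write $n=|A|=1+r\frac{(r-1)^R-1}{r-2}$ for the size of a radius-$R$ ball (in $\mathbb T_r$ with $r\geq 3$), and for each $b\in A$ compute $w_A(b)$, the number of $(x,b,z)\in\AT{\mathbb T_r}A$. A triple $(x,b,z)$ with $d(x,b)=d(b,z)=\delta$ and $d(x,z)=2\delta$ is an arithmetic progression precisely when $b$ lies on the geodesic from $x$ to $z$, i.e.\ when $x$ and $z$ lie in different subtrees hanging off $b$ (for $\delta\geq 1$), together with the degenerate case $x=b=z$. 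So for fixed $b$ at distance $k=d(v,b)$ from the center, I would count, for each radius $\delta\geq 1$, the ordered pairs $(x,z)$ with $x,z\in S_{\mathbb T_r}(b;\delta)\cap A$ lying in distinct branches at $b$; summing over $\delta$ and adding $1$ for the constant triple gives $w_A(b)$, and then $|\AT{\mathbb T_r}A|=\sum_{b\in A}w_A(b)$ by \eqref{eq:sum_w}.

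The main computational step is a careful bookkeeping of how many points of $A$ lie in each branch at $b$ at each distance $\delta$: a vertex $b$ at depth $k$ has one branch pointing toward the center (which, within distance $\delta$, meets $A$ in a predictable number of vertices depending on $k$ and $R$) and $r-1$ branches (for $k\geq 1$) or $r$ branches (for $k=0$) pointing away, each of which is a full $(r-1)$-ary subtree truncated at depth $R-k$. For pairs in distinct branches, $w_A(b)$ is essentially $(\text{number of }A\text{-points within distance }\delta\text{ of }b)^2$ minus the within-branch contributions, summed over $\delta$; the within-branch terms telescope nicely because each away-branch is itself a truncated regular tree. I expect the per-vertex counts to depend on $b$ only through its depth $k$, so that $\sum_{b\in A}w_A(b)=\sum_{k=0}^R (\#\{b: d(v,b)=k\})\,w_k$, a single sum of products of geometric series in $r-1$.

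The anticipated obstacle is managing these geometric sums so that the final closed form collapses to exactly $\bigl(\tfrac12+\tfrac{(r-2)^2}{2r^2}\bigr)n^2+\tfrac{2(r-2)}{r^2}n+\tfrac{2}{r^2}$; the coefficients suggest that after substituting $n=1+r\frac{(r-1)^R-1}{r-2}$ most of the $(r-1)^R$-dependence must cancel against the quadratic term, so I would keep everything in the variable $q:=(r-1)^R$ and verify the identity as a polynomial in $q$ (degree $2$), checking it suffices to confirm it for, say, three values of $R$ or by direct symbolic expansion. Once \eqref{eq:tree_exact} is established, the consequence is immediate: taking $A$ to be a ball of radius $R$ gives a sequence $n=n_R\to\infty$ along which $\mu_{n}(\mathbb T_r)\geq|\AT{\mathbb T_r}A|=\bigl(\tfrac12+\tfrac{(r-2)^2}{2r^2}\bigr)n^2+O(n)$, so
\[
\limsup_{n\to\infty}\frac{\mu_n(\mathbb T_r)}{n^2}\;\geq\;\lim_{R\to\infty}\frac{|\AT{\mathbb T_r}{B_{\mathbb T_r}(v;R)}|}{n_R^2}\;=\;\frac12+\frac{(r-2)^2}{2r^2},
\]
which is the claimed bound.
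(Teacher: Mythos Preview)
Your approach is correct and is the paper's approach: compute $w_A(b)$ as a function of the depth $k=d_{\mathbb T_r}(v,b)$, then sum via \eqref{eq:sum_w}. You are, however, anticipating more work than is needed. For $b$ at depth $k$ and any $\delta\le d_1:=R-k$, the \emph{entire} sphere $S_{\mathbb T_r}(b;\delta)$ lies in $A$ (by the triangle inequality $d(v,x)\le k+\delta\le R$), so all $r$ branches at $b$---the inward one included---contribute exactly $(r-1)^{\delta-1}$ vertices each; for $\delta>d_1$ only the single inward branch can meet $A$, so there are no cross-branch ordered pairs at those radii. Hence
\[
w_A(b)=1+r(r-1)\sum_{\delta=1}^{d_1}(r-1)^{2(\delta-1)}=\frac{(r-1)^{2d_1+1}-1}{r-2},
\]
depending only on $d_1=R-k$, and the asymmetric inward/outward bookkeeping and the polynomial-in-$q$ verification you worry about collapse to a single geometric-series computation.
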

Note that for any tree $T$, it holds that $\mu_n(T)\leq n^2-2n+2$ for every $n$, by Claim \ref{obs:unique}.

\begin{proof}[Proof of Proposition \ref{prop:trees}]
Consider a ball $A=B_{{\mathbb T}_r}(v_0; d_0)$ for some vertex $v_0$ and a nonnegative integer $d_0$. Note that
\begin{equation}\label{eq:gsum}
|A|=1+r\sum_{d=1}^{d_0}(r-1)^{d-1}=1+\frac{r}{r-2}\left((r-1)^{d_0}-1\right)=\frac{r}{r-2}\left((r-1)^{d_0}-\frac{2}{r}\right).
\end{equation}
Let $b$ be a vertex in $A$, let $d_1:=d_0-d_{\mathbb T_r}(v_0,b)$, and let $\mathcal C$ be the collection of connected components of the graph that is obtained from $\mathbb T_r$ by removing the vertex $b$ (and all edges adjacent to it). 
For $a,c\in \mathbb T_r$ it holds that $(a,b,c)\in \AT {\mathbb T_r} A$ if and only if either $a=b=c$ or $a,c$ belong to different sets in the collection $\mathcal C$ and $d_{\mathbb T_r}(b,a)=d_{\mathbb T_r}(b,c)\leq d_1$.
Therefore,
\begin{align*}
w_A(b)&=1+|\mathcal C|(|\mathcal C|-1)\sum_{d=1}^{d_1}\left((r-1)^{d-1}\right)^2=1+r(r-1)\sum_{d=1}^{d_1}\left((r-1)^2\right)^{d-1}\\
&=1+r(r-1)\frac{\left((r-1)^2\right)^{d_1}-1}{(r-1)^2-1}=\frac{(r-1)^{2d_1+1}-1}{r-2}.
\end{align*}
Hence, by \eqref{eq:sum_w},
\begin{align*}
(r-2)|\AT {\mathbb T_r} A|+|A|&=\sum_{b\in A}\left((r-2)w_A(b)+1\right)\\
&=\left((r-2)w_A(v_0)+1\right)+\sum_{d_1=0}^{d_0-1}\left(\sum_{b\in S_{{\mathbb T}_r}(v_0; d_0-d_1)}\left((r-2)w_A(b)+1\right)\right)\\
&=(r-1)^{2d_0+1}+\sum_{d_1=0}^{d_0-1}r(r-1)^{d_0-d_1-1} (r-1)^{2d_1+1}.
\end{align*}
Therefore, by \eqref{eq:gsum},
\begin{align*}
(r-2)|\AT {\mathbb T_r} A|+|A|&=(r-1)^{d_0}\left((r-1)\cdot(r-1)^{d_0}+r\sum_{d_1=0}^{d_0-1}(r-1)^{d_1}\right)\\
&=\left(\frac{r-2}{r}|A|+\frac{2}{r}\right)\left((r-1)\left(\frac{r-2}{r}|A|+\frac{2}{r}\right)+(|A|-1)\right),
\end{align*}
which implies \eqref{eq:tree_exact}.
\end{proof}

\begin{proposition}\label{prop:lattices}
Consider the $\ell$-dimensional lattice graph ${\mathbb Z}^{\ell}$ (where two vertices are adjacent if the Euclidean distance between them is $1$), with respect to the graph metric (alternatively, $d_{{\mathbb Z}^{\ell}}(a,b)=\sum_{i=1}^{\ell}|a_i-b_i|$ for every $a=(a_i)_{i=1}^{\ell}$ and $b=(b_i)_{i=1}^{\ell}$ in ${\mathbb Z}^{\ell}$).
There is a positive constant $c_{\ell}$ such that for every $n$,
$$\mu_n\left({\mathbb Z}^{\ell}\right)\geq c_{\ell}\,n^{3-\frac{1}{\ell}}.$$
\end{proposition}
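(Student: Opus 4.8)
The plan is to exhibit, for each $n$, a suitable combinatorial cube in $\mathbb{Z}^{\ell}$ and to count the abundant ``bent'' $3$-term progressions it carries because its metric is the $\ell^{1}$-metric. Set $m:=\lfloor n^{1/\ell}\rfloor$ and $Q:=\{0,1,\dots,m-1\}^{\ell}$, so $|Q|=m^{\ell}\le n$; let $A\subseteq\mathbb{Z}^{\ell}$ consist of $Q$ together with $n-m^{\ell}$ further, arbitrary, points, so that $|A|=n$ and $\AT{\mathbb{Z}^{\ell}}{Q}\subseteq\AT{\mathbb{Z}^{\ell}}{A}$. Since $m\ge n^{1/\ell}/2$ for every $n\ge1$, it suffices to prove $|\AT{\mathbb{Z}^{\ell}}{Q}|\ge c_{\ell}\,m^{3\ell-1}$ for all sufficiently large $m$; the finitely many remaining values of $n$ are then absorbed by shrinking the constant, using $\mu_{n}(\mathbb{Z}^{\ell})\ge1$.

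The key device is the following source of $3$-term arithmetic progressions inside $Q$. If $b\in\mathbb{Z}^{\ell}$ and $u,w\in\mathbb{Z}_{\ge0}^{\ell}$ satisfy $\sum_{i}u_{i}=\sum_{i}w_{i}=D$, then $(b-u,\,b,\,b+w)$ is a metric $3$-term arithmetic progression: coordinatewise $b-u\le b\le b+w$, hence $d_{\mathbb{Z}^{\ell}}(b-u,b+w)=\sum_{i}(u_{i}+w_{i})=2D=d_{\mathbb{Z}^{\ell}}(b-u,b)+d_{\mathbb{Z}^{\ell}}(b,b+w)$, while $d_{\mathbb{Z}^{\ell}}(b-u,b)=d_{\mathbb{Z}^{\ell}}(b,b+w)=D$. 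Moreover the triple $(b-u,b,b+w)$ determines $b$ as its middle entry, and hence determines $u$, $w$ and $D$, so $(b,D,u,w)\mapsto(b-u,b,b+w)$ is injective. I will restrict to $b$ in the ``central region'' $R:=\{b\in\mathbb{Z}^{\ell}\mid \lceil m/3\rceil\le b_{i}\le m-1-\lceil m/3\rceil\text{ for all }i\}$, which has at least $(m/4)^{\ell}$ points once $m$ is large; to $D$ with $\lceil m/6\rceil\le D\le\lceil m/3\rceil$; and to all nonnegative integer vectors $u,w$ with coordinate sum $D$. Because $D\le\lceil m/3\rceil$, every coordinate of $u$ and of $w$ is at most $\lceil m/3\rceil$, so $b-u$ and $b+w$ automatically remain in $Q$; thus each such quadruple yields a genuine element of $\AT{\mathbb{Z}^{\ell}}{Q}$.

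The count is then immediate: the number of nonnegative integer vectors with coordinate sum $D$ is $\binom{D+\ell-1}{\ell-1}\ge\binom{\lceil m/6\rceil+\ell-1}{\ell-1}\ge c'_{\ell}\,m^{\ell-1}$, and by the injectivity above,
$$|\AT{\mathbb{Z}^{\ell}}{Q}|\ \ge\ \sum_{D=\lceil m/6\rceil}^{\lceil m/3\rceil}|R|\binom{D+\ell-1}{\ell-1}^{2}\ \ge\ c''_{\ell}\cdot m\cdot m^{\ell}\cdot\bigl(m^{\ell-1}\bigr)^{2}=c''_{\ell}\,m^{3\ell-1},$$
which gives $\mu_{n}(\mathbb{Z}^{\ell})\ge|\AT{\mathbb{Z}^{\ell}}{A}|\ge c_{\ell}\,n^{3-1/\ell}$. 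I expect the only delicate point to be the accounting of the exponent rather than any genuine difficulty: fixing $b$ and the common ``radius'' $D$ leaves only an $(\ell-1)$-dimensional choice for each of $u$ and $w$, for a total of order $m^{3\ell-2}$, which is a factor $m$ short. The missing factor is recovered by also summing over $D$ across a range of length $\Theta(m)$; and keeping $D$ below $m/3$ is exactly what forces all coordinates to stay inside the cube, so the boundary constraint and the dimension count are handled in one stroke.
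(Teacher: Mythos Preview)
Your proof is correct. Both you and the paper count ``monotone'' $3$-term progressions $(b-u,b,b+w)$ with $u,w\in\mathbb{Z}_{\ge 0}^{\ell}$ having equal coordinate sums, which are $3$-APs precisely because the metric is $\ell^{1}$; this is the heart of the argument in both cases.

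The only real difference is the choice of test set. The paper takes $A$ to be an $\ell^{1}$-ball of radius $d_{0}\sim \ell n^{1/\ell}/(2e)$, stratifies by spheres, bounds $w_{A}(b)$ for $b$ at distance $d$ from the centre via $\sum_{k\le d_{0}-d}\binom{k+\ell-1}{\ell-1}^{2}$, and then collapses everything with Vandermonde identities to get $2^{\ell}\binom{d_{0}+\ell}{3\ell-1}$. Your cube $Q=\{0,\dots,m-1\}^{\ell}$ makes the counting more transparent: a central region of volume $\Theta(m^{\ell})$, a range of $D$ of length $\Theta(m)$, and independent choices of $u,w$ contributing $\Theta(m^{\ell-1})$ each, for the same total $\Theta(m^{3\ell-1})$. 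The ball is the more natural candidate set if one is aiming for sharp constants, but for the asymptotic lower bound your cube avoids all the binomial manipulation and is the cleaner route.
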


\begin{proof}
With no loss of generality we may assume that $n>(6e)^{\ell}$. Let 
$$d_0:=\left\lfloor \ell\left(\frac{\sqrt[\ell]{n}}{2e}-1\right)\right\rfloor$$
and consider the ball $A=B_{{\mathbb Z}^{\ell}}(v_0; d_0)$ for an arbitrary $v_0\in{\mathbb Z}^{\ell}$.
Note that 
\begin{equation}\label{eq:ballZell}
|A|=\sum_{k=0}^{\ell}\binom{\ell}{k}2^k\binom{d_0}{k}\leq 2^{\ell}\sum_{k=0}^{\ell}\binom{\ell}{k}\binom{d_0}{k}=2^{\ell}\binom{d_0+\ell}{\ell}<\left(\frac{2e(d_0+\ell)}{\ell}\right)^{\ell}\leq n
\end{equation}
and for every positive integer $d$,
\begin{equation}\label{eq:sphereZell}
|S_{{\mathbb Z}^{\ell}}(v_0; d)|=\sum_{k=1}^{\ell}\binom{\ell}{k}2^k\binom{d-1}{k-1}\geq 2^{\ell}\binom{d-1}{\ell-1}.
\end{equation}
For $a=(a_1,a_2,\ldots,a_{\ell}),b=(b_1,b_2,\ldots,b_{\ell}),c=(c_1,c_2,\ldots,c_{\ell})\in {\mathbb Z}^{\ell}$, it holds that $(a,b,c)$ is a $3$-term arithmetic progression in ${\mathbb Z}^{\ell}$ if and only if $(b_i-a_i)(b_i-c_i)\leq 0$ for every $1\leq i\leq \ell$ and $\sum_{i=1}^{\ell}|a_i-b_i|=\sum_{i=1}^{\ell}|b_i-c_i|$.
In particular, if $b=(b_1,b_2,\ldots,b_{\ell})\in S_{{\mathbb Z}^{\ell}}(v_0; d)$ for some $0\leq d\leq d_0$,
then $(a,b,c)\in\AT {{\mathbb Z}^{\ell}} A$ 
for any $a=(a_1,a_2,\ldots,a_{\ell}),c=(c_1,c_2,\ldots,c_{\ell})\in {\mathbb Z}^{\ell}$ such that $a_i\geq b_i\geq c_i$ for every $1\leq i\leq \ell$ and 
$$\sum_{i=1}^{\ell}(a_i-b_i)=\sum_{i=1}^{\ell}(b_i-c_i)\leq d_0-d.$$
Hence, for every $0\leq d\leq d_0$ and every $b\in S_{{\mathbb Z}^{\ell}}(v_0;d)$,
$$w_A(b)\geq \sum_{k=0}^{d_0-d}\binom{k+\ell-1}{\ell-1}^2\geq\sum_{k=0}^{d_0-d}\binom{k+\ell-1}{2\ell-2}=\binom{d_0-d+\ell}{2\ell-1}.
$$
Therefore, by \eqref{eq:sum_w} and \eqref{eq:sphereZell},
\begin{align*}
|\AT {{\mathbb Z}^{\ell}} A|&=\sum_{d=0}^{d_0}\left(\sum_{b\in S_{{\mathbb Z}^{\ell}}(v_0; d)}w_A(b)\right)>\sum_{d=\ell}^{d_0-\ell+1}\left(\sum_{b\in S_{{\mathbb Z}^{\ell}}(v_0; d)}w_A(b)\right)\\
&\geq\sum_{d=\ell}^{d_0-\ell+1}2^{\ell}\binom{d-1}{\ell-1}\binom{d_0-d+\ell}{2\ell-1}=2^{\ell}\binom{d_0+\ell}{3\ell-1}.
\end{align*}
Hence, since $|A|<n$, by \eqref{eq:ballZell}, and $d_0>2\ell-1$,
\begin{align*}
\mu_n({\mathbb Z}^{\ell})&> \mu_{|A|}({\mathbb Z}^{\ell})\geq |\AT {{\mathbb Z}^{\ell}} A|> 2^{\ell}\binom{d_0+\ell}{3\ell-1}>2^{\ell}\left(\frac{d_0+\ell}{3\ell-1}\right)^{3\ell-1}\\
&>2^{\ell}\left(\frac{d_0+\ell+1}{3\ell}\right)^{3\ell-1}> 2^{\ell}\left(\frac{\sqrt[\ell]{n}}{6e}\right)^{3\ell-1}=\frac{2^{\ell}}{(6e)^{3\ell-1}}n^{3-\frac{1}{\ell}}.
\qedhere\end{align*}
\end{proof}

\begin{proposition}\label{prop:goodman}
For every metric space $M$ and every $n$,
\begin{equation}\label{eq:gen_upper}
\mu_n(M)\leq 2\left\lfloor \frac{n}{2}\left\lfloor\frac{n-1}{2}\right\rfloor\left\lceil \frac{n-1}{2}\right\rceil\right\rfloor+n=\frac{1}{4}n^3-\frac{1}{2}n^2+\begin{cases}
n & n \text{ is even},\\
\frac{5}{4}n & n\bmod 4=1,\\
\frac{5}{4}n-1 & n\bmod 4=3.
\end{cases}
\end{equation}
Moreover, there is a metric space $M_0$ such that for every $n$,
\begin{equation*}
\mu_n( M_0)=(n-2)\left\lfloor \frac{n}{2}\right\rfloor\left\lceil \frac{n}{2}\right\rceil+n=\frac{1}{4}n^3-\frac{1}{2}n^2+\begin{cases}
n & n \text{ is even},\\
\frac{3}{4}n+\frac{1}{2} & n \text{ is odd}.
\end{cases}
\end{equation*}
\end{proposition}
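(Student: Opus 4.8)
The plan is to rewrite $|\AT M A|$ as a count of ``degenerate triangles'', bound that count by a Goodman-type averaging argument, and then treat the ``moreover'' part by an explicit computation. First I would record the elementary fact that if $(a,b,c)$ is a nonconstant $3$-term arithmetic progression then $a,b,c$ are distinct and the middle term $b$ is the \emph{only} point of $\{a,b,c\}$ that can serve as such: if $d_M(x,y)=d_M(y,z)=\tfrac12 d_M(x,z)$ also held for another labelling, subtracting the two relations among the three pairwise distances would force one of them to vanish. Hence the nonconstant members of $\AT M A$ fall into reversed pairs $\{(a,b,c),(c,b,a)\}$, and these pairs are in bijection with the family $\mathcal G$ of $3$-element subsets of $A$ admitting some labelling as a nonconstant $3$-term progression (call them \emph{good triples}). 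Together with the $n$ constant progressions this gives $|\AT M A|=n+2|\mathcal G|$, so it suffices to prove $|\mathcal G|\le\bigl\lfloor\tfrac n2\lfloor\tfrac{n-1}2\rfloor\lceil\tfrac{n-1}2\rceil\bigr\rfloor$.

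Every good triple has exactly two endpoints — the two points distinct from the middle term — so $2|\mathcal G|=\sum_{v\in A}e_v$, where $e_v$ counts good triples having $v$ as an endpoint. For fixed $v$, a good triple with endpoint $v$ is specified by its middle term $b$ and its other endpoint $z$, subject to $d_M(v,b)=d_M(b,z)=\delta$ and $d_M(v,z)=2\delta$ with $\delta:=d_M(v,b)>0$; hence, writing $N_\delta:=|A\cap S_M(v;\delta)|$,
\[
e_v=\sum_{\delta}\bigl|\{(b,z): b\in A\cap S_M(v;\delta),\ z\in A\cap S_M(v;2\delta),\ d_M(b,z)=\delta\}\bigr|\ \le\ \sum_{\delta}N_\delta N_{2\delta},
\]
where $\delta$ runs over the finite set $D_v$ of positive distances from $v$ to points of $A$ and the summand vanishes unless $2\delta\in D_v$ as well.

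The key point is that on $D_v$ the relation joining $\delta$ to $2\delta$ (whenever both lie in $D_v$) defines a graph of maximum degree $2$ with no cycle — a cycle would give $2^k\delta=\delta$ — hence a disjoint union of paths, hence bipartite. Fixing a $2$-colouring of $D_v$, the colour classes split $A\setminus\{v\}$ into parts of sizes $p$ and $q$ with $p+q=n-1$; since every nonzero term $N_\delta N_{2\delta}$ pairs a radius of one colour with a radius of the other, $e_v\le\sum_\delta N_\delta N_{2\delta}\le pq\le\lfloor\tfrac{n-1}2\rfloor\lceil\tfrac{n-1}2\rceil$ by AM--GM. Summing over $v\in A$ yields $2|\mathcal G|\le n\lfloor\tfrac{n-1}2\rfloor\lceil\tfrac{n-1}2\rceil$, and as $|\mathcal G|$ is an integer this gives the bound \eqref{eq:gen_upper}.

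For the extremal space $M_0$ I would take the complete bipartite graph on two countably infinite sides $V_1,V_2$ with its graph metric (distance $1$ across, $2$ within each side), which is immediately verified to be a metric. If $A\subseteq M_0$ has $p$ points in $V_1$ and $q$ in $V_2$, $p+q=n$, then since every positive distance equals $1$ or $2$, a nonconstant progression $(a,b,c)$ must satisfy $d_M(a,b)=d_M(b,c)=1$ and $d_M(a,c)=2$, i.e.\ $a,c$ lie on one side and $b$ (the forced middle term) on the other; counting these ordered progressions gives $|\AT {M_0} A|=pq(p-1)+pq(q-1)+n=(n-2)pq+n$, which is maximised over $p+q=n$ at $pq=\lfloor\tfrac n2\rfloor\lceil\tfrac n2\rceil$, so $\mu_n(M_0)=(n-2)\lfloor\tfrac n2\rfloor\lceil\tfrac n2\rceil+n$. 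The substantive part of the argument is the reformulation together with the realisation that the right per-vertex statistic is the number of good triples in which $v$ is an \emph{endpoint} (rather than $w_A(v)$, which records $v$ as a middle term) and that the ``double the distance'' constraint makes the surrounding sphere structure bipartite, so that the AM--GM step underlying Goodman's theorem applies verbatim; the two closed-form evaluations and the metric verification are routine.
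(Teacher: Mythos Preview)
Your proof is correct and follows essentially the same approach as the paper's: both count nonconstant progressions by their first term (equivalently, by an endpoint of the underlying ``good triple''), bipartition the distances from that point via the doubling relation $\delta\leftrightarrow 2\delta$, bound by the product of the two part sizes, and finish with the integer/parity step; for $M_0$ both use the complete bipartite graph on two infinite sides with its graph metric. The only cosmetic difference is that the paper writes down an explicit $2$-colouring of the distances (by the parity of $\lfloor\log_2 d_M(a,x)\rfloor$) whereas you observe abstractly that the doubling graph on $D_v$ is a union of paths and hence bipartite --- these are the same idea.
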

\begin{proof}
Let $A$ be a set of $n$ points in some metric space $M$. For every $a\in A$ we may partition the set $A\setminus\{a\}$ to two sets $A_1,A_2$ such that none of them contains points $b,c$ for which $d_M(a,c)=2\,d_M(a,b)$ (for instance, we may take $A_1$ to be the set of $x\in A\setminus\{a\}$ for which $\lfloor\log_2 d_M(a,x)\rfloor$ is odd and $A_2$ to be the set of $x\in A\setminus\{a\}$ for which $\lfloor\log_2 d_M(a,x)\rfloor$ is even).
Hence,
$$|\{(x,y,z)\in\AT M A: x=a\}|\leq 1+|A_1|\,|A_2|\leq 1+\left\lfloor \frac{n-1}{2}\right\rfloor \left\lceil \frac{n-1}{2}\right\rceil.$$
It follows that
$$|\AT M A|=\sum_{a\in A}|\{(x,y,z)\in\AT M A: x=a\}|\leq n+n\left\lfloor \frac{n-1}{2}\right\rfloor \left\lceil \frac{n-1}{2}\right\rceil,$$
and hence, by Observation \ref{obs:parity},
$$|\AT M A|\leq n+2\left\lfloor\frac{n}{2}\left\lfloor \frac{n-1}{2}\right\rfloor \left\lceil \frac{n-1}{2}\right\rceil\right\rfloor,$$
which proves \eqref{eq:gen_upper}.

To get the second part, consider a complete bipartite graph with infinite parts $L$ and $R$, with respect to the graph metric, i.e., $d_{L\cup R}(x,y)=2$ if $x,y$ are distinct vertices in the same part and $d_{L\cup R}(x,y)=1$ if $x,y$ are in different parts. For every finite $A\subset L\cup R$, clearly
\begin{align*}|\AT {L\cup R} A|&=|A\cap L|\, |A\cap R|(|A\cap L|-1)+|A\cap R|\, |A\cap L|(|A\cap R|-1)+|A|\\
&=(|A|-2)|A\cap R|\, |A\cap L|+|A|.\end{align*}
Hence, for every $n$,
\begin{equation*}
\mu_n(L\cup R)=(n-2)\left\lfloor n/2\right\rfloor\left\lceil n/2\right\rceil+n.
\qedhere\end{equation*}
\end{proof}

\begin{rem}
The argument proving \eqref{eq:gen_upper} may also be presented as follows. Let $A$ be a set of $n$ points in some metric space $M$. We may partition the finite set $\{d_M(a,b)\mid a,b\in A,\, a\neq b\}$ of positive real numbers to two sets $D_1,D_2$ such that none of them contains a number $\alpha$ and the number $2\alpha$. Now consider the graph on the vertex set $A$ such that distinct vertices $a,b\in A$ are adjacent if $d_M(a,b)\in D_1$. 
For any $0\leq i\leq 3$, let $r_i$ denote the number of sets of three distinct vertices of the graph having exactly $i$ edges between them. Clearly,
$$|\AT M A|\leq n+2(r_1+r_2)=n+2\binom{n}{3}-2(r_0+r_3),$$
and the result follows, since
$$r_0+r_3\geq \frac{1}{24}n^3-\frac{1}{4}n^2+\begin{cases}
\frac{1}{3}n & n \text{ is even},\\
\frac{5}{24}n & n\bmod 4=1,\\
\frac{5}{24}n+\frac{1}{2} & n\bmod 4=3,
\end{cases}$$
by Goodman's theorem \cite{goodman}.
\end{rem}

\section{Concluding remarks and open questions}\label{sec:open}

In this paper we studied the maximal number of $3$-term arithmetic progressions in $n$-element subsets of a metric space. 
We suggest a few directions for future research.

\subsection*{The extent to which the result of Green and Sisask extends}
Green and Sisask proved that the maximal number of $3$-term arithmetic progressions in $n$-element sets of integers is $\lceil n^2/2\rceil$. In Theorem \ref{thm:hyper} we showed that the same holds if the set of integers is replaced by any uniquely geodesic Riemannian manifold in which every local geodesic is a geodesic. In particular, the result of Green and Sisask is valid in Cartan--Hadamard manifolds, i.e., complete simply connected Riemannian manifolds that have everywhere nonpositive sectional curvature. 
However, \eqref{eq:trees} implies that this result does not extend to the wider class of Hadamard spaces, i.e., complete globally nonpositively curved (in the sense of A. D. Alexandrov) metric spaces.

It would be interesting to understand what is the largest natural family of metric spaces to which the result of Greem and Sisask extends.

\subsection*{Spherical geometry}
In Theorem \ref{thm:S1} we determined the maximal number of $3$-term arithmetic progressions in $n$-element subsets of the $1$-dimensional sphere $S^1$.

It would be very interesting to confirm or refute Conjecture \ref{conj:S2} regarding the $2$-dimensional sphere $S^2$ and proceed to understand the maximal number of $3$-term arithmetic progressions in $n$-element subsets of higher dimensional spheres. 

\subsection*{Asymptotic bounds}
It would be interesting to determine, at least asymptotically, the maximal number of $3$-term arithmetic progressions in $n$-element subsets of additional metric spaces.
In particular, it would be interesting to decide whether the asymptotic lower bounds \eqref{eq:trees}, in the case of the $r$-regular tree ${\mathbb T}_r$, and \eqref{eq:lattices}, in the case of the lattice graph ${\mathbb Z}^{\ell}$, are tight.

\subsection*{Equilateral (and other) triangles}
A $3$-term arithmetic progression in a metric space may also be viewed as a (degenerate) isosceles triangle. One may ask what is the maximal number of triangles of other types in $n$-element subsets of a metric space. In the Euclidean setting, this question was already studied in \cite{EP} for various types of triangles, including equilateral triangles. For a metric space $M$ and a positive integer $n$, let us denote:
$$\eta_n(M):=\max_{A\subseteq M,\, |A|=n}\left|\left\{(a,b,c)\in A^3: d_M(a,b)=d_M(a,c)=d_M(b,c)>0\right\}\right|.$$
It was shown in \cite{EP} that $\eta_n({\mathbb R}^2)=\Theta(n^2)$,
$\eta_n({\mathbb R}^3)=O(n^{7/3})$ (later improved to $\eta_n({\mathbb R}^3)=O(n^{11/5})$ in \cite{ATT}), 
$\eta_n({\mathbb R}^4)=O(n^{8/3})$, 
$\eta_n({\mathbb R}^5)=O(n^{26/9})$ 
and $\eta_n({\mathbb R}^k)=\Theta(n^3)$ for any $k\geq 6$.
It would be interesting to understand the asymptotics of $\eta_n$ in other metric spaces. 
It is simple to show that $\eta_n({\mathbb T}_r)=\Theta(n^3)$ for any $r\geq 3$ (for the lower bound, take points on a sphere around an arbitrary vertex $v_0$, evenly distributed among the connected components of the graph that is obtained from the tree by removing $v_0$).
The argument in \cite{EP} showing the quadratic upper bound in the Euclidean plane is valid for the hyperbolic plane as well, but we believe that in fact $\eta_n({\mathbb H}^2)=\Theta(n)$.

\subsection*{Longer arithmetic progressions}
The result of Green and Sisask on the maximal number of $3$-term arithmetic progressions in $n$-element sets of integers was recently extended to arithmetic progressions of any length in \cite[\S 8.1]{BGSZ}, where it was proved that for any $k$, the maximal number of $k$-term arithmetic progressions in $n$-element sets of integers is attained for $n$-term arithmetic progressions (among other sets).
Another generalization of the problem for integers was studied in \cite{A}.

It would be interesting to study the maximal number of $k$-term arithmetic progressions in $n$-element subsets of a metric space for $k>3$.


\begin{thebibliography}{999999999}

\bibitem{A}
J. Aaronson, Maximising the number of solutions to a linear equation in a set of integers, Bull. Lond. Math. Soc. {\bf 51} (2019), no.~4, 577--594.

\bibitem{ATT}
T. Akutsu, H. Tamaki\ and\ T. Tokuyama, Distribution of distances and triangles in a point set and algorithms for computing the largest common point sets, Discrete Comput. Geom. {\bf 20} (1998), no.~3, 307--331.

\bibitem{BGS}
W. Ballmann, M. Gromov\ and\ V. Schroeder, {\it Manifolds of nonpositive curvature}, Progress in Mathematics, 61, Birkh\"{a}user Boston, Inc., Boston, MA, 1985.

\bibitem{BGSZ}
B. B. Bhattacharya, S. Ganguly, X. Shao, and Y. Zhao, Upper tail large deviations for arithmetic progressions in a random set, Int. Math. Res. Not. IMRN {\bf 2020}, no.~1, 167--213. 

\bibitem{BH}
M. R. Bridson\ and\ A. Haefliger, {\it Metric spaces of non-positive curvature}, Grundlehren der Mathematischen Wissenschaften, 319, Springer-Verlag, Berlin, 1999.

\bibitem{dBE}
N. G. de Bruijn\ and\ P. Erd\H{o}s, On a combinatorial problem, Nederl. Akad. Wetensch., Proc. {\bf 51} (1948), 1277--1279 = Indagationes Math. {\bf 10}, 421--423 (1948).

\bibitem{BBI}
D. Burago, Y. Burago\ and\ S. Ivanov, {\it A course in metric geometry}, Graduate Studies in Mathematics, 33, American Mathematical Society, Providence, RI, 2001.

\bibitem{BZ}
Yu. D. Burago\ and\ V. A. Zalgaller, {\it Geometric inequalities}, translated from the Russian by A. B. Sosinski\u{\i}, Grundlehren der Mathematischen Wissenschaften, 285, Springer-Verlag, Berlin, 1988. 

\bibitem{Cha}
I. Chavel, {\it Isoperimetric inequalities: differential geometric and analytic perspectives}, Cambridge Tracts in Mathematics, 145, Cambridge University Press, Cambridge, 2001.

\bibitem{CE}
J. Cheeger\ and\ D. G. Ebin, {\it Comparison theorems in Riemannian geometry}, North-Holland Publishing Co., Amsterdam, 1975.

\bibitem{EP}
P. Erd\H{o}s\ and\ G. Purdy, Some extremal problems in geometry. III, in {\it Proceedings of the Sixth Southeastern Conference on Combinatorics, Graph Theory and Computing Florida Atlantic Univ., Boca Raton, Fla., 1975)}, 291--308. Congressus Numerantium, XIV, Utilitas Math., Winnipeg, MB.

\bibitem{goodman}
A. W. Goodman, On sets of acquaintances and strangers at any party, Am. Math Mon 66 (1959), 778--783.

\bibitem{GS} 
B. Green\ and\ O. Sisask, On the maximal number of 3-term arithmetic progressions in subsets of $\mathbb Z/p\mathbb Z$, 
Bull. Lond. Math. Soc. {\bf 40} (2008), no.~6, 945--955.

\bibitem{Gro}
M. Gromov, Paul Levy’s isoperimetric inequality, Pr\'{e}publications I.H.E.S. (1980).

\bibitem{KK}
B. R. Kloeckner\ and\ G. Kuperberg, The Cartan-Hadamard conjecture and the Little Prince, Rev. Mat. Iberoam. {\bf 35} (2019), no.~4, 1195--1258.

\bibitem{Lea}
I. Leader, Discrete isoperimetric inequalities, in {\it Probabilistic combinatorics and its applications (San Francisco, CA, 1991)}, 57--80, Proc. Sympos. Appl. Math., 44, AMS Short Course Lecture Notes, Amer. Math. Soc., Providence, RI.

\bibitem{Oss}
R. Osserman, The isoperimetric inequality, Bull. Amer. Math. Soc. {\bf 84} (1978), no.~6, 1182--1238.

\end{thebibliography}
\end{document}